\newtheorem{definition}{Definition}[section]
\newtheorem{theorem}{Theorem}[section]
\newtheorem{prop}[theorem]{Proposition}
\newtheorem{coro}[theorem]{Corollary}
\newtheorem{lemma}[theorem]{Lemma}
\newtheorem{remark}[theorem]{Remark}
\newcommand{\R}{\mathbb{R}}             
\newcommand{\N}{\mathbb{N}}             
\newcommand{\Z}{\mathbb{Z}}             %
\newcommand{\C}{\mathbb{C}}             
\renewcommand{\S}{\mathbb{S}}             
\author{Damien Gobin and Niky Kamran\footnote{Research supported by NSERC grant RGPIN 105490-2018.}}
\title{A Green's function for the source-free Maxwell equations on $AdS^5 \times \S^2 \times \S^3$}
\date{\today}
\begin{document}

\maketitle


\begin{abstract}
We compute a Green's function giving rise to the solution of the Cauchy problem for the source-free Maxwell's equations on a causal domain $\mathcal{D}$ contained in a
geodesically normal domain of the Lorentzian manifold $AdS^5 \times \S^2 \times \S^3$, where
$AdS^5$ denotes the simply connected $5$-dimensional anti-de-Sitter space-time. Our approach is to formulate the original Cauchy problem as an equivalent Cauchy problem for the Hodge Laplacian on $\mathcal{D}$ and to seek a solution in the form of a Fourier expansion in terms of the eigenforms of the Hodge Laplacian on $\S^3$. This gives rise to a sequence of inhomogeneous Cauchy problems governing the form-valued Fourier coefficients corresponding to the Fourier modes and involving operators related to the Hodge Laplacian on $AdS^5 \times \S^2$, which we solve explicitly by using Riesz distributions and the method of spherical means for differential forms. Finally we put together into the Fourier expansion on $\S^3$ the modes obtained by this procedure, producing a $2$-form on $\mathcal{D}\subset AdS^5 \times \S^2 \times \S^3$ which we show to be a solution of the original Cauchy problem for Maxwell's equations.

\vspace{0.5cm}
\noindent \textit{Keywords}. Maxwell's equations, Cauchy problem, Anti-de-Sitter space-time, Hodge Laplacian.\\
\textit{2010 Mathematics Subject Classification}. Primaries 81U40, 35P25; Secondary 58J50.
\end{abstract}

\tableofcontents

\section{Introduction and statement of the main result}



Our goal in this paper is to compute a Green's function for the source-free Maxwell equations on the $10$-dimensional symmetric Lorentzian manifold $({\mathcal M},g):=AdS^5 \times \S^2 \times \S^3$, where the metric $g$ is the product of the $5$-dimensional anti-de Sitter metric $g_{AdS^5}$ with the canonical round metrics $g_{\S^2}$ and $g_{\S^3}$ on $\S^2$ and $\S^3$, that is 
\begin{equation}\label{totalmetric}
g = g_{AdS^5} \oplus g_{\S^2} \oplus g_{\S^3}\,.
\end{equation}
In the above expression, $AdS^5$ stands for the complete simply connected Lorentzian $5$-manifold of constant sectional curvature $- \kappa$, with $\kappa > 0$ being fixed, and is thus diffeomorphic to $\R^5$. If $\theta = (\theta^1,\theta^2,\theta^3) \in [0,\pi] \times [0,\pi] \times \R/2 \pi \Z$ denote the
standard coordinates on the $3$-sphere, the metric on $AdS^5$ can be expressed globally as
\begin{equation}\label{metads5}
 g_{AdS^5} := \frac{- dt^2 + dx^2 + \cos(x)^2 g_{\S^3}}{\kappa \sin(x)^2}, 
\end{equation}
where $t \in \R$, $x \in (0,\pi/2 ]$ and where
\[ g_{\S^3} := (d\theta^1 )^2 + \sin^2(\theta^1)(d\theta^2)^2 + \sin^2(\theta^1) \sin^2(\theta^2)(d\theta^3)^2,\]
is the canonical round metric on $\S^3$.\\

While the problem of computing a Green's function for Maxwell's equations in this setting is of intrinsic geometric interest, it also carries a natural physical motivation coming from the fact that the symmetric manifold $({\mathcal M},g)$ is an exact solution of the field equations of Type IIA supergravity which enjoys natural T-duality properties (we refer to \cite{W} for a classification of all symmetric $AdS^n$ solutions of Type II supergravity and a discussion of their duality properties). Green's functions for hyperbolic differential operators are of course closely related to the solution of the initial value problem. More precisely, we shall see that for the specific class of symmetric Lorentzian manifolds $({\cal {M}},g)$ considered in this paper, the Green's function computed in our paper provides indeed the solution to a natural initial value problem for the source-free Maxwell equations. 

The approach we take to the computation of the Green's function makes use on the one hand of Riesz distributions and of generalizations to differential forms of the method of spherical means, and on the other of Fourier decompositions of differential forms in terms of eigenforms of the Hodge Laplacian. Our paper can thus be viewed as a natural continuation of earlier work by one of us in which Green's functions were constructed for the hyperbolic Hodge Laplacian on classes of Lorentzian 
symmetric spaces given by products of simply connected Lorentzian manifolds of constant sectional curvature with simply connected Riemannian surfaces of constant Gaussian curvature \cite {EK09}.

Before formulating our initial value problem, we first need to set up some basic notations. 
Thus we denote by $\Omega^k(\mathcal{M})$ the space of smooth $k$-forms on $\mathcal M$ and by $\Omega_0^k(\mathcal{M})$ the subspace of compactly supported smooth $k$-forms
on $\mathcal{M}$. We then write 
\[ \Omega_{0,d}^k(\mathcal{M}) = \{ \omega \in \Omega_0^k(\mathcal{M}) \, |  \, d \omega =0 \},\]
for the set of closed $k$-forms, and denote by $\Omega_{0,\delta}^k(\mathcal{M})$ the space of co-closed forms relative to the codifferential $\delta:=\delta_g$, 
\[ \Omega_{0,\delta}^k(\mathcal{M}) = \{ \omega \in \Omega_0^k(\mathcal{M}) \, |  \, \delta \omega =0 \}.\]
Unless otherwise stated, we write $*\omega$ for the Hodge dual $*_{g}\omega$ of a $k$-form $\omega$ relative to the metric $g$. We write $\Sigma^T$ for the space-like hypersurface in $\mathcal{M}$ defined by the level set $t = T$ and use the notation $\Sigma := \Sigma^0$.

We can now formulate the Cauchy problem for the Maxwell equations with initial data prescribed on $\Sigma$ using the
formalism of \cite{DL12,K01,P09}. To this effect, we first denote by $i : \Sigma \hookrightarrow \mathcal{M}$ the smooth embedding of $\Sigma$ in $\mathcal M$ by the inclusion map and introduce initial data given by $E \in \Omega_{0,\delta}^1(\Sigma)$ and $B \in \Omega_{0,d}^2(\Sigma)$. We are thus interested in solving
the following initial value problem for a $2$-form $F \in \Omega^2(\mathcal{M})$:
\begin{equation}\label{eqmax1}
 \left \{
\begin{array}{ccc}
    dF = 0 & \text{and} & \delta F = 0\,, \\
    -*_{\Sigma} i^{*} (* F) = E & \text{and} & - i^{*} F = B\,,\\
\end{array}
\right.
\end{equation}
where $*_{\Sigma}$ is the Hodge dual relative to the metric $i^{*}g$ induced by $g$ on the hypersurface $\Sigma$.\\
\noindent
In order to solve the above Cauchy problem we follow the approach of \cite{EK09} and restrict our analysis to a causal domain $\mathcal{D}$ contained in a geodesically
normal domain of $\mathcal{M}$ of $AdS^5$. This restriction will allow us to use arguments coming from the general theory of normally hyperbolic operators
on globally hyperbolic space-times, even though $AdS^5$ is not globally hyperbolic. For the sake of completeness let us now recall the definition of such a domain.

\begin{definition}[\cite{G88}]
 \begin{enumerate}
  \item If $U$ is the domain of validity for a normal coordinate system with origin $x \in U$, then $U$ is called a normal domain with
  respect to $x$. In that case there is for every $y \in U$ exactly one geodesic segment $[0,1] \ni t \mapsto c(t) \in U$ with $c(0) = x$, 
  $c(1)= y$ and $t$ an affine parameter.
  \item An open connected subset $\mathcal{N} \subset \mathcal{M}$ is called a geodesically normal domain if it is a normal domain with respect
  to each of its points.
  \item Let $\mathcal{N} \subset \mathcal{M}$ be a geodesically normal domain of the space-time $(\mathcal{M},g)$. An open subset $\mathcal{D}$ of $\mathcal{N}$ is called a
  causal domain in $\mathcal{N}$, if for every $x,y \in \mathcal{D}$ the set $J_\mathcal{N}^+ (x) \cap J_{\mathcal{N}}^- (y)$, where $J_{\mathcal{N}}^{\pm}$ are the causal
  future and past, is either empty or compact and contained in $\mathcal{N}$.
 \end{enumerate}
\end{definition}
\noindent
From now on, we work on a causal domain $\mathcal{D}$ contained in a geodesically normal domain of $\mathcal{M}$ and consider the Maxwell equations (\ref{eqmax1}) on
$\mathcal{D}$, i.e. with $F \in \Omega^2(\mathcal{D})$ and the set $\Sigma$ on which the initial values are defined being replaced by $\Sigma_{\mathcal{D}} = \Sigma \cap \mathcal{D}$. 

It should be noted that the general theory of the Cauchy problem for the wave equation and Maxwell's equations in curved space-time has been thoroughly investigated in the literature. A non-exhaustive list of important references includes the monographs \cite{Had, Ler, F75, G88} and the articles \cite{G66, C72, CB82, B02, B06, F04, F08,YZ14}.  Of direct relevance is a fundamental existence and uniqueness result for Maxwell's equations, obtained in \cite{G88}, Proposition 4.1, which states that if $\mathcal{D}$ is a causal domain of a Lorentzian manifold $(\mathcal{M},g)$, then there exists a unique solution $F \in \Omega^2(\mathcal{D})$ of the initial value problem (\ref{eqmax1}). 
 Our contribution in the present work can thus be viewed as an explicit construction of
this solution of the Maxwell equations in the particular case $\mathcal{M} = AdS^5 \times \S^2 \times \S^3$. This constitutes, as far as we know, one of the few concrete results
 of this kind for Maxwell's equations in higher-dimensional space-times. (We refer the reader to Appendix \ref{appersp} for some possible generalizations of the present work based on another strategy of proof.)
 
 We also note that in the framework of $4$-dimensional globally hyperbolic space-times a well-posedness result was obtained in \cite{K01} under
 some assumptions on the Cauchy surface $\Sigma$ and a more general result without any further assumptions on $\Sigma$ was proved in \cite{DL12}.
 A notable aspect of these results lies in the fact that they do not make use of a vector potential, that is a $1$-form $A$ such that $F=dA$, in order to solve the Maxwell equations, so
 that they do not require the topological condition $H^{2}_{dR}(\mathcal{M})=0$ to be satisfied. In contrast, the existence
 of such a vector potential is assumed for instance in \cite{D92,FP03}. In our work we will use the formalism and the strategy of \cite{DL12,G88}
 in order to avoid the introduction of such a potential $1$-form.\\

\newpage

In order to state our main theorem, we need to make a technical assumption about the structure of the causal domain $\mathcal{D}$ and to introduce some additional quantities. First, we assume that $\mathcal{D}$ factors as the product of $\S^3$ with a causal domain $\tilde{\mathcal{D}}$ contained in a geodesically normal domain of $AdS^5\times \S^2$, and likewise that the Cauchy hypersurface $\Sigma_{\mathcal{D}}$ factors as the product of $S^3$ with a Cauchy hypersurface $\Sigma_{\tilde{\mathcal{D}}}$ of ${\tilde{\mathcal{D}}}\subset AdS^5 \times \S^2$. Next, we perform a Fourier decomposition of the space of smooth $k$-forms on $\S^3$ in terms of 
eigenforms of the positive Hodge $k$-Laplacian $\Delta^{(k)} := - (d \delta + \delta d)$, namely, we write for $k \in \{0,1,2\}$,
\[ \Omega^k(\S^3) = \bigoplus_{\beta_k \in B_k} < \eta_{\beta_k}^{(k)} > \quad \text{with} \quad \Delta_{\S^3}^{(k)} \eta_{\beta_k}^{(k)} = \lambda_{\beta_k}^{(k)} \eta_{\beta_k}^{(k)}, \quad \forall \beta_k \in B_k,\]
where explicit formulas for the normalized eigenforms $\eta_{\beta_k}^{(k)}$ and the definition of the sets $B_k$ are given in Appendix \ref{apspeclap}. Thanks to this decomposition,
we can now expand any solution $F \in \Omega^2(\mathcal{D})$ of (\ref{eqmax1}) in terms of this Fourier basis of orthonormal eigenforms, that is, 
\[ F = \sum_{k=0}^2 \sum_{\beta_k \in B_k} F_{\beta_k} \wedge \eta_{\beta_k}^{(k)},\]
where $F_{\beta_k} \in \Omega^{2-k}(\tilde{\mathcal{D}})$. The object of our main result (see Theorem \ref{mainthm} below) is then to compute explicitly the coefficients $F_{\beta_k}$ of this Fourier series. This will be done after we prove in Sections \ref{secredwav} and
\ref{secdechod} the equivalence between Maxwell's equations (\ref{eqmax1}) and the sequence of Cauchy problems given on $\tilde{\mathcal{D}}$ by:
 \begin{equation*}
 \left \{
\begin{array}{ccc}
    &(\Delta_{\tilde{\mathcal{D}}}^{(2-k)} + \lambda_{\beta_k}^{(k)})F_{\beta_k} = 0& \\
    F_{\beta_k \, \, | \Sigma_{\tilde{\mathcal{D}}}} = \mathcal{F}_{0, \, \beta_k} & \text{and} & \nabla_{n} F_{\beta_k \, \, |\Sigma_{\tilde{\mathcal{D}}}} = \mathcal{F}_{1, \, \beta_k}\\
\end{array}, \quad k \in \{0,1,2 \},
\right.
\end{equation*}
where $F_{\beta_k} \in \Omega^{2-k}(AdS^5 \times \S^2)$ is defined on $\tilde{\mathcal{D}}$, where $\Sigma_{\tilde{\mathcal{D}}}$ is the restriction of $\Sigma_{\mathcal{D}}$ on $AdS^5 \times \S^2$ and where $\mathcal{F}_{0, \, \beta_k}$ and $\mathcal{F}_{1, \, \beta_k}$ are suitable initial data related to $E$ and $B$.\\
\noindent
Second, we need to introduce a particular smooth $(2-k)$-form $\hat{F}_{\beta_k}$ that matches the above initial data on the hypersurface $\Sigma$. More precisely, we let $\hat{F}_{\beta_k}$ be defined by
\begin{equation}\label{defhatF}
 \hat{F}_{\beta_k}(t,x) := \sum_{j=0}^{\infty} \sigma \left( \frac{t}{\epsilon_j} \right) t^j f_{j, \, \beta_k}(x), 
\end{equation}
where $\sigma : \R \rightarrow \R$ is a smooth function such that
\[ \sigma_{| [-1/2,1/2]} \equiv 1 \quad \text{with} \quad \sigma \equiv 0 \quad \text{outside} \quad [-1,1],\]
where $(\epsilon_j)_j$ is a suitable sequence taking values in the open interval $(0,1)$, and where the coefficients $f_{j, \, \beta_k}(x)$ are smooth $(2-k)$-forms that will be defined iteratively in (\ref{equ2}) below, ensuring that the preceding series can be differentiated term-wise and defines a smooth $(2-k)$-form. Moreover, 
\[ f_{0, \, \beta_k}(x) = \mathcal{F}_{0, \, \beta_k}(x) \quad \text{and} \quad f_{1,\, \beta_k}(x) = \mathcal{F}_{1, \, \beta_k}(x),\]
and the others $f_{j, \, \beta_k}$, $j\geq 2$ are defined thanks to the iterative procedure (\ref{equ2}).\\
\noindent
Next we introduce the following forms:
\begin{equation*}
  G_{+,\beta_k}(t,x) = \left \{
\begin{array}{ccc}
    (\Delta_{AdS^5 \times \S^2}^{(k)} + \gamma^{(k)})(\hat{F}_{\beta_k}) \quad &\text{if}& \quad x \in J_{\tilde{\mathcal{D}}}^+(\Sigma_{\tilde{\mathcal{D}}}) \\
    0 \quad &\text{if}& \quad x \in J_{\tilde{\mathcal{D}}}^-(\Sigma_{\tilde{\mathcal{D}}}) \\
\end{array},
\right.
\end{equation*}
and
\begin{equation*}
  G_{-, \, \beta_k}(t,x) = \left \{
\begin{array}{ccc}
    0 \quad &\text{if}& \quad x \in J_{\tilde{\mathcal{D}}}^+(\Sigma_{\tilde{\mathcal{D}}}) \\
    (\Delta_{AdS^5 \times \S^2}^{(k)} + \gamma^{(k)})(\hat{F}_{\beta_k}) \quad &\text{if}& \quad x \in J_{\tilde{\mathcal{D}}}^-(\Sigma_{\tilde{\mathcal{D}}}) \\
\end{array},
\right.
\end{equation*}
where $\gamma^{(k)} = \lambda_{\beta_{2-k}}^{(2-k)}$. We then finally define the forms $H_{\pm, \, \beta_k}$ by applying the Proposition \ref{soleqinhomo}
in the case $\omega = G_{\pm, \, \beta_k}$, i.e.
\begin{equation}\label{defHpm}
  H_{\pm, \, \beta_k}(x,y) := \int \overline{\omega_q(0,\lambda)} \omega_q(s,\lambda) \mathcal{R}_{x,D}^{2,\lambda+\gamma^{(k)}} (G_{\pm, \, \beta_k})(y)   d \mu_M (r) d \mu_{S}(s) d\rho_q(\lambda),
\end{equation}
where the functions $\omega_q$ are defined in Proposition \ref{specstudy} and $\mathcal{R}_{x,D}^{2,\lambda+\gamma^{(k)}}$ are the Riesz distributions introduced in
Definition \ref{defri}.

Using these different definitions we can now state our main theorem.

\begin{theorem}\label{mainthm}
 Assume that $\mathcal{D}$ is a causal domain contained in a geodesically normal domain of $(\mathcal{M},g)$, where
 $\mathcal{M}$ is the direct product $AdS^5 \times \S^2 \times \S^3$. The unique smooth
 solution $F \in \Omega^2(\mathcal{D})$ of the initial value problem for the Maxwell equations:
 \begin{equation}\label{caumax}
 \left \{
\begin{array}{ccc}
    dF = 0 & \text{and} & \delta F = 0 \\
    -*_{\Sigma} i^{*} (* F) = E & \text{and} & - i^{*} F = B\\
\end{array},
\right.
\end{equation}
where $E \in \Omega_{0,\delta}^1(\Sigma_{\mathcal{D}})$ and $B \in \Omega_{0,d}^2(\Sigma_{\mathcal{D}})$, is explicitly given by:
\[ F = \sum_{k=0}^2 \sum_{\beta_k \in B_k} F_{\beta_k} \wedge \eta_{\beta_k}^{(k)},\]
where
\begin{equation*}
  F_{\beta_k}(t,x) = \left \{
\begin{array}{ccc}
    F_{+, \, \beta_k}(t,x) &\text{if}& \quad x \in J_{\tilde{\mathcal{D}}}^+(\Sigma_{\tilde{\mathcal{D}}}) \\
    F_{-, \, \beta_k}(t,x) &\text{if}& \quad x \in J_{\tilde{\mathcal{D}}}^-(\Sigma_{\tilde{\mathcal{D}}}) \\
\end{array},
\right.
\end{equation*}
with
\[ F_{\pm, \, \beta_k}(t,x) = \hat{F}_{\beta_k}(t,x) - H_{\pm, \, \beta_k}(t,x),\]
where $\hat{F}_{\beta_k}(t,x)$ and $H_{\pm, \, \beta_k}(t,x)$ have been defined in (\ref{defhatF})-(\ref{defHpm}).
\end{theorem}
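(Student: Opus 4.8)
The proof has three stages: reduce the Maxwell Cauchy problem (\ref{caumax}) to the sequence of modewise Hodge--Laplace Cauchy problems on $\tilde{\mathcal{D}}$, solve each of those explicitly by a parametrix-plus-Riesz-correction argument, and reassemble the Fourier series on $\S^3$. For the first stage I invoke the equivalence proved in Sections \ref{secredwav} and \ref{secdechod}: a $2$-form $F=\sum_{k=0}^2\sum_{\beta_k\in B_k}F_{\beta_k}\wedge\eta^{(k)}_{\beta_k}$ with $F_{\beta_k}\in\Omega^{2-k}(\tilde{\mathcal{D}})$ solves (\ref{caumax}) if and only if, for every $k\in\{0,1,2\}$ and every $\beta_k\in B_k$, the coefficient $F_{\beta_k}$ solves the reduced problem $(\Delta^{(2-k)}_{\tilde{\mathcal{D}}}+\lambda^{(k)}_{\beta_k})F_{\beta_k}=0$ with $F_{\beta_k}|_{\Sigma_{\tilde{\mathcal{D}}}}=\mathcal{F}_{0,\beta_k}$ and $\nabla_n F_{\beta_k}|_{\Sigma_{\tilde{\mathcal{D}}}}=\mathcal{F}_{1,\beta_k}$, where $\mathcal{F}_{0,\beta_k}$ and $\mathcal{F}_{1,\beta_k}$ are the data read off from $E$ and $B$ in those sections. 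It therefore suffices to construct, for each mode, the unique smooth solution of the reduced problem and to check that the resulting series defines a smooth $2$-form; uniqueness of $F$ itself is then automatic from Proposition 4.1 of \cite{G88}.

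\textbf{Solving one mode.} Fix $k$ and $\beta_k$. First build the formal solution $\hat{F}_{\beta_k}$ of (\ref{defhatF}): the coefficients $f_{j,\beta_k}(x)$ are produced by the recursion (\ref{equ2}), starting from $f_{0,\beta_k}=\mathcal{F}_{0,\beta_k}$ and $f_{1,\beta_k}=\mathcal{F}_{1,\beta_k}$, so that $(\Delta^{(k)}_{AdS^5\times\S^2}+\gamma^{(k)})\hat{F}_{\beta_k}$ vanishes to infinite order along $\Sigma$; a Borel-lemma type choice of the sequence $(\epsilon_j)\subset(0,1)$ makes (\ref{defhatF}) converge in $C^\infty$, so $\hat{F}_{\beta_k}$ is a smooth $(2-k)$-form matching the prescribed Cauchy data on $\Sigma$. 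Consequently the two pieces $G_{\pm,\beta_k}$ glue to smooth $(2-k)$-forms on $\tilde{\mathcal{D}}$, since $(\Delta^{(k)}_{AdS^5\times\S^2}+\gamma^{(k)})\hat{F}_{\beta_k}$ and all its derivatives vanish on the separating hypersurface $\Sigma_{\tilde{\mathcal{D}}}$. Apply Proposition \ref{soleqinhomo} with $\omega=G_{\pm,\beta_k}$: this yields $H_{\pm,\beta_k}$, given by the Riesz-distribution integral (\ref{defHpm}) built from the spectral data of Proposition \ref{specstudy} and the distributions $\mathcal{R}_{x,D}^{2,\lambda+\gamma^{(k)}}$ of Definition \ref{defri}, solving the reduced inhomogeneous equation with right-hand side $G_{\pm,\beta_k}$ and vanishing Cauchy data on $\Sigma_{\tilde{\mathcal{D}}}$; moreover, because $G_{\pm,\beta_k}$ is supported in $J_{\tilde{\mathcal{D}}}^{\pm}(\Sigma_{\tilde{\mathcal{D}}})$ and (\ref{defHpm}) respects the causal structure of $\tilde{\mathcal{D}}$, $H_{\pm,\beta_k}$ is supported in the closure of $J_{\tilde{\mathcal{D}}}^{\pm}(\Sigma_{\tilde{\mathcal{D}}})$. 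Setting $F_{\pm,\beta_k}=\hat{F}_{\beta_k}-H_{\pm,\beta_k}$ we get $(\Delta^{(2-k)}_{\tilde{\mathcal{D}}}+\lambda^{(k)}_{\beta_k})F_{\pm,\beta_k}=(\Delta+\gamma^{(k)})\hat{F}_{\beta_k}-G_{\pm,\beta_k}=0$ on $J_{\tilde{\mathcal{D}}}^{\pm}(\Sigma_{\tilde{\mathcal{D}}})$, while $F_{\pm,\beta_k}|_{\Sigma}=\mathcal{F}_{0,\beta_k}$ and $\nabla_n F_{\pm,\beta_k}|_{\Sigma}=\mathcal{F}_{1,\beta_k}$ since $H_{\pm,\beta_k}$ has vanishing Cauchy data. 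The two halves agree to first order on $\Sigma_{\tilde{\mathcal{D}}}$, and since both solve the same second-order normally hyperbolic equation there, all their $t$-derivatives agree on $\Sigma_{\tilde{\mathcal{D}}}$; hence the piecewise form $F_{\beta_k}$ is smooth, equals $\hat{F}_{\beta_k}-H_{+,\beta_k}-H_{-,\beta_k}$ in view of the support properties just noted, and is the unique smooth solution of the reduced Cauchy problem on $\tilde{\mathcal{D}}$.

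\textbf{Reassembly.} It remains to show that $F=\sum_{k=0}^2\sum_{\beta_k\in B_k}F_{\beta_k}\wedge\eta^{(k)}_{\beta_k}$ converges to a smooth $2$-form on $\mathcal{D}$. Since $E$ and $B$ are smooth and compactly supported on $\Sigma_{\mathcal{D}}=\S^3\times\Sigma_{\tilde{\mathcal{D}}}$, their $\S^3$-Fourier coefficients, hence the data $\mathcal{F}_{0,\beta_k}$ and $\mathcal{F}_{1,\beta_k}$, decay faster than any power of $\lambda^{(k)}_{\beta_k}$ in every relevant norm; combining this with bounds on the modewise solution operator $(\mathcal{F}_{0,\beta_k},\mathcal{F}_{1,\beta_k})\mapsto F_{\beta_k}$ that are uniform in $\beta_k$ on the compact regions of $\tilde{\mathcal{D}}$ where the data propagate, one sees that the series and all its term-wise derivatives converge uniformly on compact subsets of $\mathcal{D}$, so $F\in\Omega^2(\mathcal{D})$. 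By the equivalence of Stage one, $F$ then solves (\ref{caumax}), and by Proposition 4.1 of \cite{G88} it is the unique such solution; this is exactly the assertion of the theorem.

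\textbf{Main obstacle.} The conceptual core --- writing each mode solution as the formal parametrix $\hat{F}_{\beta_k}$ corrected by the Riesz-distribution term $H_{\pm,\beta_k}$ --- is clean, and the existence-uniqueness input from \cite{G88} is already available. The work lies in the two convergence arguments: the Borel-type choice of $(\epsilon_j)$ making (\ref{defhatF}) a genuine smooth form, and, more seriously, obtaining estimates on $H_{\pm,\beta_k}$ (equivalently on the modewise solution operator) that are uniform as the eigenvalue $\lambda^{(k)}_{\beta_k}$ grows, so that the Fourier series on $\S^3$ converges in $C^\infty$; this requires controlling the Riesz-distribution integrals (\ref{defHpm}) together with the spectral measures $d\rho_q$ of Proposition \ref{specstudy} as the spectral parameter tends to infinity, which is the technically delicate part of the proof.
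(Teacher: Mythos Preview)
Your proposal is correct and follows essentially the same route as the paper: reduce to the Hodge--Laplace Cauchy problem (Section~\ref{secredwav}), Fourier-decompose on $\S^3$ (Section~\ref{secdechod}), solve each mode by the parametrix $\hat{F}_{\beta_k}$ corrected by the Riesz-distribution solution $H_{\pm,\beta_k}$ of the inhomogeneous problem (Sections~\ref{secpbin}--\ref{secprepar}), and reassemble (Section~\ref{secfinpre}). Your identification of the uniform-in-eigenvalue estimates on the modewise solution operator as the main technical obstacle is in fact more careful than the paper's own treatment, which dispatches the $C^\infty$ convergence in Section~\ref{secfinpre} with a brief appeal to Bessel's inequality and the smoothness of each summand.
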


The strategy of our proof is the following. First, as mentioned previously, we follow the works \cite{DL12,G88,L10} in order to prove the equivalence between the Cauchy problem
(\ref{caumax}) and the Cauchy problem
 \begin{equation}\label{cauwav}
 \left \{
\begin{array}{ccc}
    &\Delta_{\mathcal{D}}^{(2)} (F) = 0& \\
    F_{| \Sigma_{\mathcal{D}}} = \mathcal{F}_0 & \text{and} & \nabla_{n} F_{|\Sigma_{\mathcal{D}}} = \mathcal{F}_1\\
\end{array},
\right.
\end{equation}
where $\Delta_{\mathcal{D}}^{(2)} = -(d \delta + \delta d)$ is the positive Hodge Laplacian acting on $2$-forms
and $\mathcal{F}_0$ and $\mathcal{F}_1$ are the restrictions to $\Sigma_{\mathcal{D}}$ of well chosen elements of $\Omega_0^2(\mathcal{D})$ depending on $E$
and $B$. We now study the Cauchy problem (\ref{cauwav}). Next, since $\mathcal{M} = AdS^5 \times \S^2 \times \S^3$ is a product manifold, the Hodge Laplacian $\Delta^{(2)}_{AdS^5 \times \mathbb{S}^2 \times \mathbb{S}^3}$ decomposes as follows: 
\[ \Delta^{(2)}_{AdS^5 \times \mathbb{S}^2 \times \mathbb{S}^3} = \begin{pmatrix}
\Delta^{(0)}_{AdS^5 \times \S^2} \oplus \Delta^{(2)}_{\mathbb{S}^3} & 0 & 0 \\
0 & \Delta^{(1)}_{AdS^5 \times \S^2} \oplus \Delta^{(1)}_{\mathbb{S}^3} & 0 \\
0 & 0 &\Delta^{(2)}_{AdS^5 \times \S^2} \oplus \Delta^{(0)}_{\mathbb{S}^3}
\end{pmatrix}.\]
where the blocks act on $\Omega^0(AdS^5 \times \S^2) \otimes \Omega^2(\S^3)$, $\Omega^1(AdS^5 \times \S^2) \otimes \Omega^1(\S^3)$ and
$\Omega^2(AdS^5 \times \S^2) \otimes \Omega^0(\S^3)$, respectively. We then perform a Fourier expansion of $F$ using the basis of eigenforms introduced above for the Hodge $k$-Laplacian on the $3$-sphere and obtain a sequence of Cauchy problems given, for $k \in \{0,1,2 \}$, by:
 \begin{equation}\label{cauwavpart}
 \left \{
\begin{array}{ccc}
    &(\Delta_{\tilde{\mathcal{D}}}^{(k)} + \lambda_{\beta_k}^{(2-k)})F_{\beta_k} = 0& \\
    F_{\beta_k \, \, | \Sigma_{\tilde{\mathcal{D}}}} = \mathcal{F}_{0, \, \beta_k} & \text{and} & \nabla_{n} F_{\beta_k \, \, |\Sigma_{\tilde{\mathcal{D}}}} = \mathcal{F}_{1, \, \beta_k}\\
\end{array},
\right.
\end{equation}
where as mentioned in introduction $\tilde{\mathcal{D}}$ is a copy of a subspace of $AdS^5 \times \S^2$ satisfying the the same properties as $\mathcal{D}$,
$(\lambda_{\beta_k}^{(k)})_{\beta_k}$ is the spectrum of the Hodge $k$-Laplacian on $\S^3$,
i.e. $\Delta_{\S^3}^{(k)}$, $\Sigma_{\tilde{\mathcal{D}}}$ is the Cauchy hypersurface of $AdS^5 \times \S^2$ corresponding to $\Sigma_{\mathcal{D}}$ for a fixed point
of $\S^3$, $F \in \Omega^{2-k}(AdS^5 \times \S^2)$ is defined on $\tilde{\mathcal{D}}$ and $\mathcal{F}_{0, \, \beta_k}$ and
$\mathcal{F}_{1, \, \beta_k}$ are the coefficients of the decomposition of the initial data $\mathcal{F}_{0}$ and $\mathcal{F}_{1}$ on the basis of eigenforms
$\eta_{\beta_k}^{(k)}$, respectively. The reason for making use of such a
decomposition lies in the fact that it reduces the original initial value problem to a sequence of initial value problems on the product manifold $AdS^5 \times \S^2$. This type of product fits into the framework studied in \cite{EK09}, where the inhomogeneous problem
\[ \Delta_{M \times S}^{(k)} F = \omega,\]
where $M$ is a Lorentzian manifold and $S$ is a Riemannian surface, is solved by using Riesz distributions and spherical means for differential forms. We shall see that the construction of \cite{EK09} can be readily modified if one adds a nonnegative constant to the Hodge Laplacian, meaning that we can solve the sequence of inhomogeneous problems given by
 \begin{equation}\label{inhomowav}
    (\Delta_{\tilde{\mathcal{D}}}^{(k)} + \lambda_{\beta_k}^{(2-k)})F_{\beta_k} = \omega, \quad k \in \{0,1,2\},
\end{equation}
corresponding to our Cauchy problem (\ref{cauwavpart}). Finally, following \cite{BGP07,G88}, we will show how to construct a solution of the Cauchy problem (\ref{cauwavpart}) as the sum of the explicit
solutions of the inhomogeneous problems introduced above and a smooth $2$-form (also explicitly constructed in the spirit of the $B$-series studied in \cite{G88})
satisfying the initial value conditions. As a last step we take the Fourier sum of the solutions obtained on each mode to obtain the complete solution, i.e. a solution of (\ref{cauwav}) which
is completely explicit, thereby giving an expression for a Green's function for the source-free Maxwell equations on $AdS^5 \times \S^2 \times \S^3$.

The paper is organized as follows. First, we show in Section \ref{secredwav} the equivalence between the Cauchy problem for the Maxwell equations and the Cauchy problem for
the Hodge Laplacian.
Next, in Section \ref{secdechod} we study the decomposition of the Laplacian and we reduce the Cauchy problem for the Hodge Laplacian on
$\mathcal{D} \subset \mathcal{M}$ into a countable family
of Cauchy problems for the Hodge Laplacian, up to a positive constant, on $\tilde{\mathcal{D}} \subset AdS^5 \times \S^2$. We then solve in Section \ref{secpbin}
the corresponding inhomogeneous problem using Riesz distributions and
spherical means. Finally, we construct the solutions of the sequence of Cauchy problems in Section \ref{secprepar} and we add them to obtain the complete solution of the Cauchy
problem for the Hodge Laplacian on the domain $\mathcal{D} \subset \mathcal{M}$ in Section \ref{secfinpre}.
There are also two appendices in this paper. Appendix \ref{apspeclap} deals with the spectral theory for the Hodge Laplacian on $\S^3$, while Appendix \ref{appersp}
suggests an alternative approach to the computation of a Green's function and some possible generalizations of our result to more general geometries which arise as solutions of supergravity, but which are of cohomogeneity one as opposed to being homogeneous.

\section{Proof of the main theorem}

\subsection{Reduction to a wave equation}\label{secredwav}

Our aim in this section is to prove the following Proposition.

\begin{prop}\label{eqmaxwav}
 Let $\mathcal{D}$ be a causal domain contained in a geodesically normal domain of $(\mathcal{M},g)$. The Cauchy problem for the Maxwell equations
\begin{equation}\label{caumax2}
 \left \{
\begin{array}{ccc}
    dF = 0 & \text{and} & \delta F = 0 \\
    -*_{\Sigma} i^{*} (* F) = E & \text{and} & - i^{*} F = B\\
\end{array},
\right.
\end{equation}
where $E \in \Omega_{0,\delta}^1(\Sigma_{\mathcal{D}})$ and $B \in \Omega_{0,d}^2(\Sigma_{\mathcal{D}})$, with $\Sigma_{\mathcal{D}} = \Sigma \cap \mathcal{D}$,
is then equivalent to the Cauchy problem for the Hodge Laplacian given by
 \begin{equation}\label{cauwav2}
 \left \{
\begin{array}{ccc}
    &\Delta_{\mathcal{D}}^{(2)} (F) = 0& \\
    F_{| \Sigma_{\mathcal{D}}} = \mathcal{F}_0 & \text{and} & \nabla_{n} F_{|\Sigma_{\mathcal{D}}} = \mathcal{F}_1\\
\end{array},
\right.
\end{equation}
where $\mathcal{F}_0$ and $\mathcal{F}_1$ are well chosen maps from $\Sigma_{\mathcal{D}}$ into $\Omega_0^2(\mathcal{D})$ depending on $E$ and $B$.
\end{prop}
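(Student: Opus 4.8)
The plan is to establish the equivalence in two directions, treating the differential (field) equations and the initial (constraint) equations separately, with the key observation being that Maxwell's equations $dF=0$, $\delta F=0$ imply the Hodge–de Rham equation $\Delta^{(2)} F = -(d\delta + \delta d)F = 0$ immediately, so that the real content lies in (a) recovering $dF=0$ and $\delta F=0$ from $\Delta^{(2)}F=0$ together with suitably chosen Cauchy data, and (b) translating the geometric initial conditions $-*_\Sigma i^*(*F)=E$, $-i^*F=B$ into Dirichlet/Neumann data $\mathcal{F}_0, \mathcal{F}_1$ for the wave-type problem.

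For the forward direction I would argue as follows. Suppose $F\in\Omega^2(\mathcal{D})$ solves \eqref{caumax2}. Then $\Delta^{(2)}_{\mathcal D}F=0$ is automatic. To produce $\mathcal{F}_0$ and $\mathcal{F}_1$, I would decompose $F$ along $\Sigma_{\mathcal D}$ into its tangential and normal parts relative to the unit normal $n$ (using that $\Sigma$ is spacelike, so $g(n,n)=-1$ or $+1$ with a fixed sign convention), expressing $F|_{\Sigma_{\mathcal D}}$ in terms of $i^*F=-B$ and the contraction $\iota_n F$, the latter being determined by $*_\Sigma i^*(*F)=-E$ via the standard identity relating $\iota_n$, the Hodge star on $\mathcal M$, and the induced Hodge star on $\Sigma$. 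This gives $\mathcal{F}_0$. For $\mathcal{F}_1 = \nabla_n F|_{\Sigma_{\mathcal D}}$, the point is that $dF=0$ and $\delta F=0$ are first-order equations that, once the spatial derivatives along $\Sigma$ are known from $\mathcal{F}_0$, determine the transversal derivative $\nabla_n F$ on $\Sigma_{\mathcal D}$ algebraically; concretely one splits $d$ and $\delta$ into a part differentiating in the $\Sigma$-directions and a part involving $\nabla_n$, and solves for the normal derivative. Crucially, $E\in\Omega^1_{0,\delta}(\Sigma_{\mathcal D})$ and $B\in\Omega^2_{0,d}(\Sigma_{\mathcal D})$ guarantee the resulting data are consistent (these are exactly the Gauss-type constraints $\delta_\Sigma E = 0$, $d_\Sigma B=0$), and I would note that $\mathcal{F}_0,\mathcal{F}_1$ extend to elements of $\Omega_0^2(\mathcal D)$ because $E,B$ are compactly supported. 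One then reads off the explicit maps $\mathcal F_0, \mathcal F_1$ from $E$ and $B$.

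For the converse — the substantive direction — suppose $F$ solves \eqref{cauwav2} with $\mathcal{F}_0,\mathcal{F}_1$ constructed from $E,B$ as above. I must show $dF=0$ and $\delta F=0$ on all of $\mathcal D$, and that the geometric initial conditions hold. The initial conditions follow by construction of $\mathcal F_0,\mathcal F_1$ (reversing the algebra of the previous paragraph). For the field equations, the standard trick is to consider the auxiliary 3-form $\alpha := dF$ and 1-form $\beta := \delta F$. Since $\Delta^{(2)}F=-(d\delta+\delta d)F=0$, applying $d$ gives $d\delta d F = 0$, i.e. $\Delta^{(3)}\alpha = -\delta d\, \alpha = -\delta d\, dF$; together with $d\alpha = d^2F = 0$ one gets $\Delta^{(3)}\alpha = 0$, and similarly $\Delta^{(1)}\beta = 0$. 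Thus $\alpha$ and $\beta$ each satisfy a normally hyperbolic (wave) equation on the causal domain $\mathcal D$. By the uniqueness part of the Cauchy problem for normally hyperbolic operators on a causal domain (this is where the restriction to $\mathcal D\subset$ a geodesically normal domain is used, so that the theory of \cite{G88,BGP07} applies even though $AdS^5$ is not globally hyperbolic), it suffices to show $\alpha$ and $\beta$ vanish to first order on $\Sigma_{\mathcal D}$: i.e. $dF|_{\Sigma_{\mathcal D}}=0$, $\nabla_n(dF)|_{\Sigma_{\mathcal D}}=0$, and the same for $\delta F$. The vanishing of $dF$ and $\delta F$ on $\Sigma_{\mathcal D}$ follows from the precise choice of $\mathcal F_0,\mathcal F_1$ (they encode $E\in\Omega^1_{0,\delta}$ and $B\in\Omega^2_{0,d}$, which are exactly the constraints forcing $\alpha|_\Sigma=\beta|_\Sigma=0$), and the vanishing of the normal derivatives follows by differentiating the equation $\Delta^{(2)}F=0$ once in the normal direction and using the already-established relations on $\Sigma_{\mathcal D}$.

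The main obstacle, and the step I would spend the most care on, is the bookkeeping in the converse direction: verifying that the $3+1$-type decomposition of $dF=0$ and $\delta F=0$ along the spacelike hypersurface $\Sigma_{\mathcal D}$ really does split into "constraint equations on $\Sigma_{\mathcal D}$" (which must be arranged by the choice of $\mathcal F_0, \mathcal F_1$ from $E,B$, using $\delta_\Sigma E=0$ and $d_\Sigma B=0$) plus "evolution equations" that are then enforced by $\Delta^{(2)}F=0$ via the uniqueness argument for the wave operators on $\alpha=dF$ and $\beta=\delta F$. Getting the sign conventions and the precise form of $\mathcal F_0, \mathcal F_1$ right — in particular the interplay of $*$, $*_\Sigma$, $i^*$, $\iota_n$, and $\nabla_n$ — is the delicate routine part; the conceptual engine is simply Hodge theory ($\Delta = -(d\delta+\delta d)$) plus finite-speed-of-propagation/uniqueness on a causal domain, exactly as in \cite{DL12,G88,L10}.
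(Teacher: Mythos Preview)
Your proposal is correct and follows essentially the same approach as the paper: both use the trivial forward implication from $\Delta^{(2)}=-(d\delta+\delta d)$, construct $\mathcal{F}_0,\mathcal{F}_1$ from $E,B$ via the tangential/normal splitting (the paper simply quotes the explicit local-coordinate formulas from \cite{DL12,L10}), and for the converse show that $dF$ and $\delta F$ each satisfy a Hodge-wave equation with vanishing Cauchy data on $\Sigma_{\mathcal D}$, then invoke uniqueness for normally hyperbolic operators on the causal domain via \cite{G88}. The only cosmetic differences are that the paper phrases the wave equations for $dF,\delta F$ through the commutators $[\Delta,d]=[\Delta,\delta]=0$, and asserts the vanishing of both $dF|_{\Sigma_{\mathcal D}}$ and $\nabla_n dF|_{\Sigma_{\mathcal D}}$ directly from the explicit choice of $\mathcal{F}_0,\mathcal{F}_1$ rather than deriving the latter from the equation as you sketch.
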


\begin{proof}
 To do this we follow the idea of \cite{G88}, Proposition 4.1 p.283, \cite{DL12}, Proposition 2.1 and \cite{L10}, of which we recall the main lines for the sake of completeness.
 First, it is easy to check that if
 \[  dF = 0 \quad \text{and} \quad \delta F = 0,\]
 then
 \[ \Delta_{\mathcal{D}}^{(2)}(F) = - (d \delta + \delta d)(F) = 0.\]
 We now need to prove that we can always select suitable initial data for the wave equation so that a solution of (\ref{cauwav2}) is also a solution of (\ref{caumax2})
 as done in \cite{DL12,L10}. In the latter papers, the suitable Cauchy data are constructed by choosing maps $\mathcal{F}_0$ and $\mathcal{F}_1$ from
 $\Sigma_{\mathcal{D}}$ into $\Omega_0^2(\mathcal{\mathcal{D}})$ such that
 \[\mathcal{F}_{0 \, |V \cap \Sigma_{\mathcal{D}}} = n_0 E_j d\phi^0 \wedge d\phi^j - \frac{1}{2} B_{ij} d\phi^i \wedge d\phi^j,\]
 and
 \[\mathcal{F}_{1 \, |V \cap \Sigma_{\mathcal{D}}} = n^0 \nabla_i \mathcal{F}_{0j} d\phi^i \wedge d\phi^j - n_0 g^{ij} \nabla_i \mathcal{F}_{jk} d\phi^0 \wedge d\phi^k,\]
where $V$ is a coordinate patch of $\mathcal{D}$ that intersects $\Sigma_{\mathcal{D}}$ in a non empty set endowed with a local chart $\phi$ and $n_{\mu}$ is the unit normal vector
to $\Sigma_{\mathcal{D}}$. Concerning the initial data we refer the reader to the proof of Proposition 2.1 of \cite{DL12} and to \cite{L10} for additional details.
Let us now prove that a solution $F$ of (\ref{cauwav2})
is also a solution (\ref{caumax2}). Assume that
\[ \Delta_{\mathcal{D}}^{(2)}(F) = 0.\]
Since, $[ \Delta, \delta] =  [\Delta, d] = 0$, we can deduce from the previous equality that
\[ \Delta_{\mathcal{D}} ( \delta F) = 0 \quad \text{and} \quad \Delta_{\mathcal{D}}( d F) = 0.\]
We also have, thanks to our choice of initial data, that
\[ dF_{| V \cap \Sigma_{\mathcal{D}}} = 0 \quad \text{and} \quad \nabla_{n} dF_{| V \cap \Sigma_{\mathcal{D}}} = 0.\]
The quantity $dF$ is then a solution of the problem
 \begin{equation*}
 \left \{
\begin{array}{ccc}
    &\Delta_{\mathcal{D}}(dF) = 0& \\
    dF_{| \Sigma_{\mathcal{D}}} = 0 & \text{and} & \nabla_{n} dF_{|\Sigma_{\mathcal{D}}} = 0\\
\end{array}.
\right.
\end{equation*}
Since we restricted our analysis to a causal domain $\mathcal{D}$ contained in a geodesically normal domain of $\mathcal{M}$,
our framework is exactly the one used in the Proposition 4.1 p. 201 of \cite{G88}. We can thus
conclude that $dF = 0$ (see \cite{G88}, Proposition 4.1 p.283). Using the same argument
we can prove in the same way that $\delta F = 0$. We have thus proved that a solution $F$ of (\ref{cauwav2}) is actually also a solution
of (\ref{caumax2}).
 \end{proof}

\subsection{Decomposition of the Hodge Laplacian and sequence of Cauchy problems}\label{secdechod}

The purpose of this section consists in using the decomposition of the Hodge Laplacian on the product manifold $\mathcal{M} = AdS^5 \times \S^2 \times \S^3$ in order to
simplify the Cauchy problem (\ref{cauwav2}). More precisely, we shall use the eigenmodes on the $3$-sphere to transform this complete Cauchy problem into a 
sequence of Cauchy problems indexed by the spectrum of the Hodge Laplacian. Our motivation for reducing the complete Cauchy problem into Cauchy
problems on $AdS^5 \times \S^2$ lies in the fact that this framework was studied in \cite{EK09} and we want to use this work
in order to construct explicitly solutions of these Cauchy problems.

First, let us note that, thanks to the product structure of the manifold $\mathcal{M} = AdS^5 \times \S^2 \times \S^3$, the Hodge Laplacian acting on $\Omega^2(\mathcal{M})$ can be decomposed in a convenient way. Indeed, it follows from the product structure of ${\cal M}=AdS^5 \times \mathbb{S}^2 \times \mathbb{S}^3$ that this operator can be expressed in matrix form as
\[ \Delta^{(2)}_{AdS^5 \times \mathbb{S}^2 \times \mathbb{S}^3} = \begin{pmatrix}
\Delta^{(0)}_{AdS^5 \times \S^2} \oplus \Delta^{(2)}_{\mathbb{S}^3} & 0 & 0 \\
0 & \Delta^{(1)}_{AdS^5 \times \S^2} \oplus \Delta^{(1)}_{\mathbb{S}^3} & 0 \\
0 & 0 &\Delta^{(2)}_{AdS^5 \times \S^2} \oplus \Delta^{(0)}_{\mathbb{S}^3}
\end{pmatrix},\]
where the blocks act on $\Omega^0(AdS^5 \times \S^2) \otimes \Omega^2(\S^3)$, $\Omega^1(AdS^5 \times \S^2) \otimes \Omega^1(\S^3)$ and
$\Omega^2(AdS^5 \times \S^2) \otimes \Omega^0(\S^3)$, respectively.\\
\noindent
Let us now introduce the spectrum and the eigenforms of the Hodge $k$-Laplacians $\Delta_{\S^3}^{(k)}$. We refer the reader to Appendix \ref{apspeclap} for more details on this point and also 
to the references \cite{BHQR16,KMHR00,L04}. Thanks to these works we can decompose for $k=0,1,2$ the spaces of $k$-forms on the $3$-sphere without multiplicity
in the following way:
\[ \Omega^0(\S^3) = \bigoplus_{\beta_0 \in B_0} < \eta_{\beta_0}^{(0)} > \quad \text{where} \quad \Delta_{\S^3}^{(0)} \eta_{\beta_0}^{(0)} = \lambda_{\beta_0}^{(0)} \eta_{\beta_0}^{(0)}, \quad \forall \beta_0 \in B_0,\]
\[ \Omega^1(\S^3) = \bigoplus_{\beta_1 \in B_1} < \eta_{\beta_1}^{(1)} > \quad \text{where} \quad \Delta_{\S^3}^{(1)} \eta_{\beta_1}^{(1)} = \lambda_{\beta_1}^{(1)} \eta_{\beta_1}^{(1)}, \quad \forall \beta_1 \in B_1,\]
and
\[ \Omega^2(\S^3) = \bigoplus_{\beta_2 \in B_2} < \eta_{\beta_2}^{(2)} > \quad \text{where} \quad \Delta_{\S^3}^{(2)} \eta_{\beta_2}^{(2)} = \lambda_{\beta_2}^{(2)} \eta_{\beta_2}^{(2)}, \quad \forall \beta_2 \in B_2.\]
Note that the spectral theory of the Hodge Laplacian on the space of $3$-forms on $\S^3$, $\Omega^3(\S^3)$, is completely equivalent to the spectral theory on
$\Omega^0(\S^3)$ thanks to the Hodge duality. We also emphasize that the previous decompositions
are given without mutliplicity, meaning that an eigenvalue $\lambda_{\beta_k}^{(k)}$, $k \in \{0,1,2\}$, can be repeated.

Thus, restricted to a triplet $(\eta_{\beta_2}^{(2)},\eta_{\beta_1}^{(1)},\eta_{\beta_0}^{(0)})$, the operator $\Delta_{\mathcal{M}}^{(2)}$ is given by
\[ \Delta^{(2)}_{AdS^5 \times \mathbb{S}^2 \times \mathbb{S}^3} = \begin{pmatrix}
\Delta^{(0)}_{AdS^5 \times \S^2} + \lambda_{\beta_2}^{(2)} & 0 & 0 \\
0 & \Delta^{(1)}_{AdS^5 \times \S^2} +\lambda_{\beta_1}^{(1)} & 0 \\
0 & 0 &\Delta^{(2)}_{AdS^5 \times \S^2} + \lambda_{\beta_0}^{(0)}
\end{pmatrix}.\]
We remark that on the first block we can use the eigenmodes on the $2$-sphere $\S^2$ in the same way in order to obtain the scalar operator
 $\Delta^{(0)}_{AdS^5} + \mu + \lambda_{\beta_2}^{(2)}$, where $\mu$ is an eigenvalue of $\Delta^{(0)}_{\S^2}$, as was
 studied in \cite{EK10}, leading to a representation of the solution.

\noindent
Using these eigenforms we then obtain a sequence of Cauchy problems indexed by the spectrum of the Hodge Laplacian on $\S^3$, given on
$\tilde{\mathcal{D}}\subset AdS^5 \times \S^2$ by:
 \begin{equation*}
 \left \{
\begin{array}{ccc}
    &(\Delta_{\tilde{\mathcal{D}}}^{(2-k)} + \lambda_{\beta_k}^{(k)})F_{\beta_k} = 0& \\
    F_{\beta_k \, \, | \Sigma_{\tilde{\mathcal{D}}}} = \mathcal{F}_{0, \, \beta_k} & \text{and} & \nabla_{n} F_{\beta_k \, \, |\Sigma_{\tilde{\mathcal{D}}}} = \mathcal{F}_{1, \, \beta_k}\\
\end{array}, \quad k \in \{0,1,2 \},
\right.
\end{equation*}
where $F_{\beta_k} \in \Omega^{2-k}(AdS^5 \times \S^2)$ is defined on $\tilde{\mathcal{D}}$, $\Sigma_{\tilde{\mathcal{D}}}$ is the restriction of $\Sigma_{\mathcal{D}}$ on
$AdS^5 \times \S^2$ and $\mathcal{F}_{0, \, \beta_k}$ and $\mathcal{F}_{1, \, \beta_k}$ are the corresponding initial data.
To keep the notation simple, in the next three sections we will omit some subscripts and we thus rewrite these Cauchy problems as
 \begin{equation}\label{cauwavpart2}
 \left \{
\begin{array}{ccc}
    &(\Delta_{\tilde{\mathcal{D}}}^{(k)} + \gamma^{(k)})F_k = 0& \\
    F_{k \, \, | \Sigma_{\tilde{\mathcal{D}}}} = \mathcal{F}_{0, \, k} & \text{and} & \nabla_{n} F_{k \, \, |\Sigma_{\tilde{\mathcal{D}}}} = \mathcal{F}_{1, \, k}\\
\end{array}, \quad k \in \{0,1,2 \},
\right \}
\end{equation}
where $\gamma^{(k)} := \lambda_{\beta_{2-k}}^{(2-k)}$ and $F_k := F_{\beta_k} \in \Omega^{k}(\tilde{\mathcal{D}})$ is defined on the causal domain $\tilde{\mathcal{D}}$
contained in a geodesically normal domain of $AdS^5 \times \S^2$. The goal is now to solve each of the Cauchy problems
(\ref{cauwavpart2}) on $\Omega^k(AdS^5 \times \S^2)$. As mentioned earlier, this fits precisely into the framework studied in \cite{EK09} in which Green's functions were computed for Hodge Laplacians on products of simply connected Lorentzian symmetric spaces of constant sectional curvature with simply connected Riemannian surfaces of constant Gaussian curvature. 
\subsection{Solution of the inhomogeneous problem}\label{secpbin}

In this section we recall the results obtained in \cite{EK09} and slightly modify them in order to make them applicable to the sequence of inhomogeneous Cauchy problems (\ref{inhomeq}) on $\Omega^k(AdS^5 \times \S^2)$ which will stem from the sequence of Cauchy problems
(\ref{cauwavpart2})  (see Section \ref{secprepar} below). Most of this material is a slight variant of what appears in \cite{EK09}, but we have nevertheless chosen to include it in order to set up some key notation that will be used in the remainder of our paper and to make the exposition sufficiently self-contained. In \cite{EK09}, the inhomogeneous problem
\begin{equation}\label{inhomo}
  \Delta_{M \times S}^{(k)} F = \omega,
\end{equation}
is considered, where $M$ is a simply connected Lorentzian manifold of constant sectional curvature $k_M$,
$S$ is a simply connected Riemannian surface of constant Gaussian curvature and $\omega \in \Omega^p(D) \otimes \Omega^q(S)$, and $D$ is a geodesically normal domain of $M$.

An explicit solution of (\ref{inhomo}) is obtained for this problem in \cite{EK09} by constructing an advanced Green's operator for $\Delta_{M \times S}^{(k)}$ and using Riesz distributions and spherical means.\\
\noindent
For our purpose of computing a Green's function for Maxwell's equations in $({\cal M},g)$, the system (\ref{cauwavpart2}) obtained in the previous section by Fourier decomposition on $\S^3$ will lead to an inhomogeneous equation of the form 
\begin{equation}\label{inhomonous}
  (\Delta_{M \times S}^{(k)} + \gamma^{(k)}) F = \omega,
\end{equation}
where $M$ stands for $AdS^5$ ($k_M = - \kappa < 0$), $S$ stands for $\S^2$ and
$\gamma^{(k)}$ is a nonnegative constant determined by the spectrum of the Hodge Laplacian on the $3$-sphere. We thus need to slightly modify
the argument of \cite{EK09} in order to prove that we can still explicitly construct a solution if we add a nonnegative constant to the Hodge Laplacian.

For the convenience of the reader we first give the definitions and some useful properties of the Riesz distributions and the spherical means used in \cite{EK09} and
previously given in \cite{G88,R49}. Let us recall once again that we work on a domain $D \subset M$ which is causal and included in a geodesically normal domain. We
then use the notation $J_D^+(x)$ for the points of $D$ causally connected with $x$, i.e. the points $x' \in D$ such that there exists a future directed, non-spacelike curve
from $x$ to $x'$. We set
\[S_D^+ (x,r) = \{ x' \in J_D^+(x) \, : \, \rho(x,x') = r \},\]
where $\rho : D \times D \rightarrow [0,+\infty)$ is the Lorentzian distance function, and we write $dS$ for the induced area measure on $S_D^+ (x,r)$.

\begin{definition}[\cite{EK09}, Definition 2]\label{defri}
 The Riesz distribution at $x$ with parameter $\alpha$ is the distribution on $D$ defined for $\text{Re}(\alpha) \geq n$ by the integral
 \[ \mathcal{R}_{x,D}^{\alpha}(\phi)(x) := C_{\alpha}  \int_{J_D^+(x)} \rho(x,x')^{\alpha-n} \phi(x') dx',\]
 where $\phi$ is an arbitrary smooth function compactly supported in $D$, $dx'$ stands for the Lorentzian volume element and
 \[ C_{\alpha} := \frac{\pi^{1-\frac{n}{2}} 2^{1-\alpha}}{\Gamma(\frac{\alpha}{2}) \Gamma( \frac{\alpha-n}{2} + 1)}.\]
\end{definition}
\noindent
The map $\alpha \mapsto \mathcal{R}_{x,D}^{\alpha}$ is holomorphic in $\C$ and $\alpha \mapsto \mathcal{R}_{x,D}^{\alpha}(\phi)$ is thus an entire function for any
smooth function $\phi$ with compact support in $D$. Let us recall the following fundamental result on Riesz distributions.

\begin{theorem}[\cite{G88,R49}]\label{OldTheorem1}
 For any fixed $x \in D$, the map $\alpha \mapsto \mathcal{R}_{x,D}^{\alpha}$ can be holomorphically extended to the whole complex plane. Moreover,
 $\Delta_{M} \mathcal{R}_{x,D}^{\alpha} = \mathcal{R}_{x,D}^{\alpha-2}$ and $\mathcal{R}_{x,D}^{0} = \delta_x$.
\end{theorem}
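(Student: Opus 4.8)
The statement is classical, due to M.~Riesz \cite{R49} in flat space and extended in \cite{G88} to Lorentzian manifolds of constant sectional curvature; I outline the argument I would follow. The plan is in three steps: (i) in the flat model $M=\R^n$, establish the recursion $\Delta_M\mathcal{R}^\alpha=\mathcal{R}^{\alpha-2}$ and the normalisation $\mathcal{R}^0=\delta_x$ by direct computation, the constant $C_\alpha$ being tailored precisely so that the recursion holds; (ii) use the recursion to produce the holomorphic continuation to all of $\C$; (iii) transfer the statements to the constant-curvature case via geodesic normal coordinates.

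\emph{Flat model.} I would place $x$ at the origin, drop $D=\R^n$ from the notation, and write $\gamma(x'):=\rho(0,x')^2$ for the Lorentzian quadratic form, so that on $J^+(0)$ one has $\mathcal{R}^\alpha_0=C_\alpha\,\gamma^{(\alpha-n)/2}$, extended by $0$ outside the forward cone. On the half-plane where the defining integral converges absolutely, $\alpha\mapsto\mathcal{R}^\alpha_0(\phi)$ is holomorphic by differentiation under the integral sign. From the two flat identities $g(\nabla\gamma,\nabla\gamma)=4\gamma$ and $\Delta_M\gamma=2n$ one computes, for every exponent $s$,
\[ \Delta_M\bigl(\gamma^{s+1}\bigr)=2(s+1)(2s+n)\,\gamma^{s}, \]
so for $s=(\alpha-n)/2$ the right-hand side equals $\alpha(\alpha-n+2)\,\gamma^{(\alpha-n)/2}$; since the powers of $2$ together with the functional equation $\Gamma(z+1)=z\Gamma(z)$ yield $C_\alpha/C_{\alpha+2}=\alpha(\alpha-n+2)$, this reads exactly $\Delta_M\mathcal{R}^\alpha_0=\mathcal{R}^{\alpha-2}_0$ wherever both sides are convergent integrals (for $\mathrm{Re}(\alpha)$ large the integration by parts involved produces no contribution on the light cone). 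The normalisation $\mathcal{R}^0_0=\delta_0$ then follows from the explicit flat formulas by the standard meromorphic-continuation argument of \cite{R49}: the zeros of $C_\alpha$ cancel the poles of the analytically continued integral and its value at $\alpha=0$ is $\phi(0)$; equivalently, $\mathcal{R}^2_0$ is the classical fundamental solution of $\Delta_M$ with support in $\overline{J^+(0)}$, whence $\mathcal{R}^0_0=\Delta_M\mathcal{R}^2_0=\delta_0$.

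\emph{Holomorphic continuation.} For arbitrary $\alpha\in\C$, I would pick $m\in\N$ with $\mathrm{Re}(\alpha)+2m$ in the region of absolute convergence and \emph{define} $\mathcal{R}^\alpha_{x,D}:=\Delta_M^m\,\mathcal{R}^{\alpha+2m}_{x,D}$ as a distribution on $D$. The recursion shows this is independent of $m$; since $\alpha\mapsto\mathcal{R}^{\alpha+2m}_{x,D}(\phi)$ is holomorphic on its half-plane and $\Delta_M^m$ is a fixed differential operator, $\alpha\mapsto\mathcal{R}^\alpha_{x,D}(\phi)$ is entire, and both $\Delta_M\mathcal{R}^\alpha_{x,D}=\mathcal{R}^{\alpha-2}_{x,D}$ and $\mathcal{R}^0_{x,D}=\delta_x$ then extend from the region of convergence to all of $\C$ by analytic continuation.

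\emph{Constant curvature.} On a geodesically normal domain $D\subset M$ of the constant-curvature space one argues along the same lines in geodesic normal coordinates based at $x$, the new feature being that the radial part of $\Delta_M$ now carries terms built from the curvature of $M$ (equivalently, from a density factor such as the van Vleck--Morette determinant). The content of \cite{G88} is that, for $M$ of \emph{constant} sectional curvature, these terms are explicit enough that the recursion $\Delta_M\mathcal{R}^\alpha_{x,D}=\mathcal{R}^{\alpha-2}_{x,D}$ and the holomorphic continuation can still be carried through, while $\mathcal{R}^0_{x,D}=\delta_x$ is a local statement near $x$ and reduces to the flat computation. I expect this last point to be the main obstacle: it is the one place where constancy of the sectional curvature is genuinely used — for a general Lorentzian manifold one is instead left with a Hadamard-type remainder in the recursion — and I would simply invoke \cite{G88} for it. In the present paper the statement is applied with $M=AdS^5$, the analogous constant-curvature statement for the $\S^2$ factor entering when the two factors are recombined in Section~\ref{secpbin}.
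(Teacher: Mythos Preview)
The paper does not supply its own proof of this theorem: it is stated with the citation \cite{G88,R49} and used as a black box, so there is no in-paper argument to compare against. Your outline is the standard Riesz--G\"unther argument and is correct in spirit; the flat computation you give (the identities $g(\nabla\gamma,\nabla\gamma)=4\gamma$, $\Delta_M\gamma=2n$, and the constant ratio $C_{\alpha}/C_{\alpha+2}=\alpha(\alpha-n+2)$) is accurate, and the bootstrap $\mathcal{R}^{\alpha}:=\Delta_M^m\mathcal{R}^{\alpha+2m}$ is exactly how the entire continuation is produced.

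One point where you should be more careful is step (iii). In a general constant-curvature space the naive recursion acquires a curvature correction: working in geodesic polar coordinates one finds an extra term proportional to $k_M$ when applying $\Delta_M$ to $\rho^{\alpha-n}$, because the radial Laplacian involves $\sqrt{k_M}\cot(\sqrt{k_M}r)$ rather than $(n-1)/r$. The exact identity $\Delta_M\mathcal{R}^{\alpha}_{x,D}=\mathcal{R}^{\alpha-2}_{x,D}$ as stated is literally the flat Riesz recursion from \cite{R49}; in \cite{G88} the constant-curvature case is handled, but it is not simply ``the same computation in normal coordinates'' --- the density factor and the curvature term must be tracked, and one either absorbs them into a modified kernel or accepts a two-term recursion. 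Your sentence ``these terms are explicit enough that the recursion \dots\ can still be carried through'' glosses over this; since the paper only \emph{uses} the theorem and defers to \cite{G88}, that is acceptable here, but if you were actually writing the proof you would need to say precisely what form the recursion takes for $k_M\neq 0$ and how the holomorphic continuation and the identity $\mathcal{R}^0_{x,D}=\delta_x$ survive it.
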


\begin{coro}
 For any $\phi \in C_0^{\infty}(D)$, $u(x) = \mathcal{R}_{x,D}^{2}(\phi)$ solves the hyperbolic equation $\Delta_M u = \phi$ in $D$.
\end{coro}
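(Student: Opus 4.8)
The plan is to derive this from Theorem~\ref{OldTheorem1} together with the formal self-adjointness of $\Delta_M$ (with respect to the Lorentzian volume element on $D$) and Fubini's theorem for distributions. The single genuine subtlety is one of bookkeeping: Theorem~\ref{OldTheorem1} describes how $\Delta_M$ acts on the \emph{distribution} $\mathcal{R}_{x,D}^{\alpha}$, i.e.\ in the variable $x'$ against which it is integrated, whereas the corollary concerns $\Delta_M$ applied to the \emph{function} $x\mapsto\mathcal{R}_{x,D}^{2}(\phi)$, i.e.\ in the base point $x$. Reconciling the two actions is the heart of the matter.

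First I would make the base point visible by writing, for $\mathrm{Re}(\alpha)\ge n$,
\[ \mathcal{R}^{\alpha}(x,x') := C_{\alpha}\,\rho(x,x')^{\alpha-n}\,\mathbf{1}_{J_D^+(x)}(x'), \qquad \mathcal{R}_{x,D}^{\alpha}(\phi)=\int_D \mathcal{R}^{\alpha}(x,x')\,\phi(x')\,dx', \]
the kernel being continued holomorphically in $\alpha$ as in Definition~\ref{defri}. Since the Lorentzian distance is symmetric, $\rho(x,x')=\rho(x',x)$, and since $\mathbf{1}_{J_D^+(x)}(x')=\mathbf{1}_{J_D^-(x')}(x)$, this advanced kernel based at $x$ coincides with the analogously defined \emph{retarded} kernel based at $x'$. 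Applying Theorem~\ref{OldTheorem1} to $(M,g)$ with reversed time orientation --- which interchanges advanced and retarded Riesz distributions, $\Delta_M$ being unchanged --- therefore yields the base-point identities $\Delta_{M,x}\,\mathcal{R}^{\alpha}(x,x')=\mathcal{R}^{\alpha-2}(x,x')$ and $\mathcal{R}^{0}(x,x')=\delta_{x'}$; in particular $\Delta_{M,x}\,\mathcal{R}^{2}(x,x')=\delta_{x'}$, so for each fixed $x'$ the distribution $x\mapsto\mathcal{R}^{2}(x,x')$ is a fundamental solution of $\Delta_M$ with pole at $x'$. (Alternatively one argues directly: for $\mathrm{Re}(\alpha)$ large the integral defining $\mathcal{R}_{x,D}^{\alpha}(\phi)$ may be differentiated with respect to $x$ under the integral sign --- the boundary term on $\partial J_D^+(x)$ vanishing because $\rho^{\alpha-n}\to 0$ there --- and the eikonal and transport equations satisfied by $\rho$ on the constant-curvature manifold $M$ give $\Delta_M\mathcal{R}_{x,D}^{\alpha}=\mathcal{R}_{x,D}^{\alpha-2}$ in the base point; one then extends to $\alpha=2$ using that $\alpha\mapsto\mathcal{R}_{x,D}^{\alpha}(\phi)$ is entire, as recorded after Definition~\ref{defri}. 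This is essentially the computation of \cite{R49,G88}.)

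Granting this, the corollary follows by the usual pairing argument. For an arbitrary $\chi\in C_0^\infty(D)$, using the self-adjointness of $\Delta_M$ and then the interchange of the two distributional pairings (legitimate since $\phi$ and $\Delta_M\chi$ are smooth and compactly supported),
\[ \langle \Delta_M u,\chi\rangle = \langle u,\Delta_M\chi\rangle = \int_D \phi(x')\,\langle\Delta_M\mathcal{R}^{2}(\cdot,x'),\chi\rangle\,dx' = \int_D \phi(x')\,\chi(x')\,dx', \]
where the last equality uses $\Delta_M\mathcal{R}^{2}(\cdot,x')=\delta_{x'}$ from the previous paragraph. Since $\chi$ was arbitrary, $\Delta_M u=\phi$ in $D$. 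The smoothness of $u$ on $D$, needed to read this as a classical identity, follows from the same differentiation-under-the-integral and holomorphy considerations, or may be quoted directly from \cite{G88}.

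I expect the main obstacle to be precisely the interchange performed in the second paragraph: setting up the retarded Riesz distribution and checking that Theorem~\ref{OldTheorem1} transfers to it verbatim under time reversal, or, in the alternative route, justifying the differentiation under the integral near the light cone and the analytic continuation in $\alpha$. Once that bridge is in place the remainder is only Fubini together with the self-adjointness of $\Delta_M$, and since every ingredient is classical (\cite{G88,R49,BGP07}), the write-up should be short and amount to quoting those sources and assembling them as above.
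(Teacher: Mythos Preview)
Your proposal is correct. The paper gives no proof at all for this corollary: it is stated as an immediate consequence of Theorem~\ref{OldTheorem1}, the intended one-line argument being $\Delta_M u(x)=\Delta_M\mathcal{R}_{x,D}^{2}(\phi)=\mathcal{R}_{x,D}^{0}(\phi)=\delta_x(\phi)=\phi(x)$.

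The distinction you draw between the two roles of $\Delta_M$ (acting on the distribution in $x'$ versus on the base point $x$) is real, and your resolution via the symmetry of $\rho$ and time reversal (or, equivalently, differentiation under the integral for large $\mathrm{Re}(\alpha)$ followed by analytic continuation) is exactly how Riesz and G\"unther justify it. In the paper's setting of constant sectional curvature this symmetry makes the two actions coincide, which is why the authors, following \cite{G88,R49}, suppress the issue entirely and read Theorem~\ref{OldTheorem1} as already applying in the $x$-variable. Your write-up is therefore more careful than the paper's, not different in substance; it makes explicit a step the cited references absorb into the statement of the theorem.
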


In order to define the spherical means on $M$,
we now need to introduce bundle-valued distributions $\tau_p$ and $\hat{\tau}_p$ over $D$ using the Lorentzian distance $\rho$ and following \cite{EK09,G88}.
Given a degree $p$, we define distribution-valued double differential $p$-forms $\tau_p$ and $\hat{\tau}_p$ by setting
\[ \tau_0(x,x') := 1, \quad \tau_1(x,x') := -\frac{\sin(\sqrt{k_M}r)}{\sqrt{k_M}} dd' \rho(x,x'),\]
\[ \tau_p(x,x') := \frac{1}{p} \tau_{p-1}(x,x') \wedge \wedge' \tau_1(x,x') \quad \text{for} \quad 2 \leq p \leq n,\]
and
\[ \hat{\tau}_0(x,x') := 0, \quad \hat{\tau}_1(x,x') := - d \rho(x,x') d' \rho(x,x'),\]
\[ \hat{\tau}_p(x,x') := \hat{\tau}_{1}(x,x') \wedge \wedge' \hat{\tau}_{p-1}(x,x') \quad \text{for} \quad 2 \leq p \leq n,\]
at the points $(x,x') \in D \times D$ where the distance function is smooth, and where the unaccented and accented operators $d$ and $d'$ are evaluated at $x$ and $x'$ respectively.
These distributions are thus maps $\tau_p, \, \hat{\tau}_p : D \times D \rightarrow \Lambda^p(T^{\star} M) \boxtimes \Lambda^p(T^{\star} M)$, where
$\Lambda^p(T^{\star} M) \boxtimes \Lambda^p(T^{\star} M)$ is the vector bundle over $D \times D$ whose fiber $(x,x')$ is
$\Lambda^p(T_x^{\star} M) \otimes \Lambda^p(T_{x'}^{\star} M)$.\\
\noindent
The point-wise left and right inner products of a section $\Psi$ of $\Lambda^p(T^{\star} M) \boxtimes \Lambda^p(T^{\star} M)$ with a $p$-form $\omega \in \Omega^p(M)$ are defined by 
\[ \Psi(x,x') \cdot \omega(x) := * \left( \Psi(x,x') \wedge * \omega(x) \right),\]
and
\[ \Psi(x,x') \cdot' \omega(x') := *' \left( \Psi(x,x') \wedge' *' \omega(x') \right).\]
These left and right inner products are thus maps $\Gamma(\Lambda^p(T^{\star}M) \boxtimes \Lambda^p(T^{\star}M)) \times \Omega^p(M) \rightarrow C^{\infty}(M) \otimes \Omega^p(M)$.
Left and right products can likewise be defined for distributions $\Psi : D \times D \rightarrow \Lambda^p(T^{\star}M) \boxtimes \Lambda^p(T^{\star}M)$
and $\omega \in \Omega^p(M)$.\\
\noindent
The double-differential forms we just defined enjoy the notable property that if $x \in D$ and $x' \in J_D^+(x)$ does not lie on the causal cut locus of $x$, then
$\tau_p(x,x') + \hat{\tau}_p(x,x')$ is the parallel transport operator $\Lambda^p(T_x^{\star} D) \rightarrow \Lambda^p(T_{x'}^{\star} D)$ along the unique
future-directed minimal geodesic from $x$ to $x'$. In particular, $\tau_p(x,x') + \hat{\tau}_p(x,x')$ is the identity map on $\Lambda^p(T_x^{\star} M)$.

\begin{definition}[\cite{EK09}, Definition 3]
 Let $\omega \in \Omega_0^p(D)$. Its Lorentzian spherical means of radius $r$ are defined as
  \[ \mathcal{M}_r^D \omega(x) := \frac{(-1)^p}{m(r)} \int_{S_D^+(x,r)} \tau(x,x') \cdot' \omega(x') dS(x'),\]
 and
  \[ \hat{\mathcal{M}}_r^D \omega(x) := \frac{(-1)^p}{m(r)} \int_{S_D^+(x,r)} \hat{\tau}(x,x') \cdot' \omega(x') dS(x'),\]
 where $x \in D$ and $m$ is given by
 \[ m(r) := |\S^{n-1}| \left( \frac{\sin(\sqrt{k_M} r)}{\sqrt{k_M}} \right)^{n-1}.\]
\end{definition}


\begin{prop}[\cite{EK09,G88}]\label{idspherm}
 The spherical means satisfy:
 \[ \Delta_M \left( \int_{0}^{\infty} \left( f(r) \mathcal{M}_r \omega +  \hat{f}(r) \hat{\mathcal{M}}_r \omega \right) d \mu_M(r) \right)
  = \int_{0}^{\infty} \left(F(f,\hat{f})(r) \mathcal{M}_r \omega + \hat{F}(f,\hat{f})(r) \hat{\mathcal{M}}_r \omega \right) d \mu_M(r),\]
 where $d \mu_M(r) := m(r) dr$,
 \[ F(f,\hat{f})(r) := L_M f(r) + pk_M \left( \left( 2 \csc^2(\sqrt{k_M}r) + n - p -1 \right) f(r) - 2 \cos(\sqrt{k_M}r) \csc^2(\sqrt{k_M}r) \hat{f}(r) \right),\]
 and
 \[ \hat{F}(f,\hat{f})(r) := L_M \hat{f}(r) + (n-p)k_M \left( \left( 2 \csc^2(\sqrt{k_M}r) + p -1 \right) \hat{f}(r) - 2 \cos(\sqrt{k_M}r) \csc^2(\sqrt{k_M}r) f(r) \right),\]
with $L_M$ being the radial Laplacian
\[ -L_M := \frac{\partial^2}{\partial r^2} + (n-1) \sqrt{k_M} \cot(\sqrt{k_M}r) \frac{\partial}{\partial r},\]
for all arbitrary smooth functions $f$ and $\hat{f}$ for which the latter integrals converge.
\end{prop}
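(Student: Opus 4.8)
The plan is to follow the computation of \cite{EK09}, which itself adapts G\"unther's method of spherical means \cite{G88}, and to reduce the identity to two ingredients: the explicit Hessian of the Lorentzian distance $\rho$ in a space of constant sectional curvature $k_M$, and the fact that $\tau_p(x,x')+\hat{\tau}_p(x,x')$ is the parallel transport operator along the geodesic from $x$ to $x'$. First I would fix $x\in D$ and work in geodesic polar coordinates $(r,\vartheta)$ centred at $x$; since $D$ is causal and contained in a geodesically normal domain, the Lorentzian sphere $S_D^+(x,r)$ never meets the cut locus of $x$, so $\rho$, $\tau_p$ and $\hat{\tau}_p$ are smooth on the relevant region and the $r$-integral is a genuine proper integral against $d\mu_M(r)=m(r)\,dr$. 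In these coordinates $\Delta_M$ splits into a radial part — whose radial second-order piece together with the first-order term $(n-1)\sqrt{k_M}\cot(\sqrt{k_M}r)\,\partial_r$ is exactly $-L_M$ once the Jacobian $m(r)$ is accounted for — and a transverse part tangent to $S_D^+(x,r)$.

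Next I would compute $\Delta_M\big(\mathcal{M}_r\omega\big)$ and $\Delta_M\big(\hat{\mathcal{M}}_r\omega\big)$ directly. Differentiating $\tau_1(x,x')=-\tfrac{\sin(\sqrt{k_M}r)}{\sqrt{k_M}}\,dd'\rho$ and $\hat{\tau}_1(x,x')=-d\rho\,d'\rho$ with respect to the base point $x$ brings in $\nabla d\rho$, which in constant curvature equals $\sqrt{k_M}\cot(\sqrt{k_M}r)\,(g-d\rho\otimes d\rho)$ on the $(n-1)$-dimensional complement of the radial field $\partial_r$; this is the source of the $\csc^2(\sqrt{k_M}r)$ and $\cos(\sqrt{k_M}r)\csc^2(\sqrt{k_M}r)$ coefficients, the $\sin$ in the denominator of $\tau_1$ meeting a second derivative of the Hessian. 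The zeroth-order contributions proportional to $pk_M$ and $(n-p)k_M$ arise because the ``Laplacian'' of the parallel-transport double form is governed by the curvature operator on $p$-forms, which in a constant-curvature space is an explicit scalar Weitzenb\"ock term; the asymmetry between $p$ and $n-p$ reflects the splitting of $\Lambda^p(T^\star M)$ into the part paired with $\mathcal{M}_r$ (built from $\tau$) and the part paired with $\hat{\mathcal{M}}_r$ (built from $\hat{\tau}$). The transverse derivatives, after integration over $S_D^+(x,r)$, are dealt with by Stokes' theorem on the Lorentzian sphere: exact pieces integrate to zero and the remaining terms reorganise, using $\tau(x,x')+\hat{\tau}(x,x')=\mathrm{id}$, into $\mathcal{M}_r$ and $\hat{\mathcal{M}}_r$ of $\omega$ itself with coefficients depending only on $r$.

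Finally I would integrate by parts in $r$ to transfer the two radial derivatives off $m(r)$ and onto $f$ and $\hat{f}$, producing $L_M f$ and $L_M\hat{f}$. The boundary term at $r=0$ vanishes since $m(r)=|\S^{n-1}|(\sin(\sqrt{k_M}r)/\sqrt{k_M})^{n-1}\sim|\S^{n-1}|r^{n-1}$ while $\mathcal{M}_r\omega(x)$ and $\hat{\mathcal{M}}_r\omega(x)$ stay bounded, and the outer boundary term vanishes by the standing hypothesis that the integrals (hence $f$, $\hat f$ and the relevant derivatives) are controlled, $\omega$ being compactly supported. Collecting the radial contribution with the curvature and transverse contributions yields exactly $F(f,\hat f)$ multiplying $\mathcal{M}_r\omega$ and $\hat F(f,\hat f)$ multiplying $\hat{\mathcal{M}}_r\omega$, which is the asserted identity; for $p=0$ it specialises to G\"unther's classical scalar formula.

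The main obstacle is the double-differential-form bookkeeping of the second step: one must differentiate $\tau_p$ and $\hat{\tau}_p$ (defined recursively by wedging $\tau_1$, $\hat{\tau}_1$) with respect to $x$, separate the purely radial, purely transverse, and mixed terms, and verify that after integrating over $S_D^+(x,r)$ everything collapses back onto $\mathcal{M}_r\omega$ and $\hat{\mathcal{M}}_r\omega$ with precisely the stated $r$-dependent coefficients, with no genuinely new double forms surviving. A secondary technical point, already neutralised by restricting to a causal domain inside a geodesically normal domain, is the legitimacy of differentiating under the integral sign and applying Stokes' theorem despite $\rho$ being only Lipschitz across the cut locus; on $D$ this does not arise, and for $k_M=-\kappa<0$, i.e. $M=AdS^5$, all the trigonometric functions above are read as the corresponding hyperbolic functions, which are entire, so no additional issues appear.
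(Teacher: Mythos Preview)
The paper does not supply a proof of this proposition: it is quoted verbatim from \cite{EK09,G88} and no argument is given beyond the citation. Your outline is a faithful sketch of the computation carried out in those references (G\"unther's spherical-means machinery, specialised by Enciso--Kamran to constant curvature via the explicit Hessian of $\rho$ and the parallel-transport interpretation of $\tau_p+\hat\tau_p$), so there is nothing in the paper to compare it against beyond noting that your approach is the intended one.
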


In order to study the Equations (\ref{inhomo})-(\ref{inhomonous}) we also need some quantities related to the Riemannian surface $S$, still following \cite{EK09}.
We introduce the notation $\mathcal{S}_s$ and $\hat{\mathcal{S}}_s$ for the spherical means of radius $s$ on the Riemannian surface $S$ and call
\[ d \mu_{S} (s) := 2 \pi \frac{\sin(\sqrt{k_S}s)}{\sqrt{k_S}} ds,\]
where $k_S$ is the scalar curvature on $S$, the radial measure on $(0,\text{diam}(S))$. A very useful observation is the following

\begin{lemma}[\cite{EK09}, Lemma 1]
 Let $\beta \in \Omega_0^q(S)$. Then
 \[ \Delta_{S} \left( \int_{0}^{\text{diam}(S)} \omega(s) (\mathcal{S}_s + \hat{\mathcal{S}}_s) \beta d \mu_{S}(s) \right)
  = \int_{0}^{\text{diam}(S)} L_q \omega(s) (\mathcal{S}_s + \hat{\mathcal{S}}_s) \beta d \mu_{S}(s),\]
  where $\omega \in H_{\text{loc}}^2 ((0,\text{diam}(S)),d \mu_{S})$ and
  \[ L_0 = L_2 := - \frac{\partial^2}{\partial s^2} - \sqrt{k_S} \cot(\sqrt{k_S} s) \frac{\partial}{\partial s}\]
  and
  \[ L_1 \omega (s) := L_0 \omega (s) + \frac{2 \sqrt{k_S} \omega (s)}{1 + \cos(\sqrt{k_S} s)}.\]
\end{lemma}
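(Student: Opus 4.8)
The plan is to recognise this identity as the two-dimensional Riemannian counterpart of the spherical-means commutation formula of Proposition~\ref{idspherm}, and to carry it out in two steps. The first step is to record the analogue of that proposition on the surface $S$: running the same double-form computation, now with the distributions $\tau_q,\hat\tau_q$ attached to the Riemannian distance on the simply connected constant-curvature surface $S$ (so that $n=2$ and the signature is elliptic), produces, for all smooth radial functions $f,\hat f$ and all $\beta\in\Omega_0^q(S)$,
\[ \Delta_S\!\left( \int_0^{\text{diam}(S)} \bigl( f(s)\,\mathcal{S}_s + \hat f(s)\,\hat{\mathcal{S}}_s \bigr)\beta \, d\mu_S(s) \right) = \int_0^{\text{diam}(S)} \bigl( G(f,\hat f)(s)\,\mathcal{S}_s + \hat G(f,\hat f)(s)\,\hat{\mathcal{S}}_s \bigr)\beta \, d\mu_S(s), \]
where $G,\hat G$ are the coefficients of Proposition~\ref{idspherm} specialised to $n=2$ with $k_M$ replaced by $k_S$ and $p$ by $q$; by density the identity then extends to radial $f,\hat f\in H^2_{\text{loc}}((0,\text{diam}(S)),d\mu_S)$. (This is the natural statement for $q\in\{0,1\}$; the top degree $q=2$ will be reduced to $q=0$ by Hodge duality below.) The weight $d\mu_S(s)=2\pi\frac{\sin(\sqrt{k_S}s)}{\sqrt{k_S}}\,ds$ vanishes to first order at $s=0$ (and, when $k_S>0$, at the antipodal radius $s=\text{diam}(S)$) while $s\mapsto\mathcal{S}_s\beta,\hat{\mathcal{S}}_s\beta$ remain smooth there, which is what legitimises differentiating under the integral sign.

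The second step is to specialise to $f=\hat f=\omega$ and treat the three degrees in turn. For $q=0$ the ``hat'' double-form is zero, $\hat\tau_0=0$ by definition, so $\hat{\mathcal{S}}_s\beta\equiv 0$ and only the $\mathcal{S}_s$-term survives; moreover the zeroth-order (curvature) part of $G$ carries an overall factor $q$, so $G(\omega,\omega)=L_0\omega$, which is the claim. For $q=2$ one again has $\hat\tau_2=\hat\tau_1\wedge\wedge'\hat\tau_1=0$ on a surface (it is built from $d\rho\,d'\rho$ and $d\rho\wedge d\rho=0$), and the degree-$2$ case reduces to the degree-$0$ case through the Hodge star $*:\Omega^0(S)\to\Omega^2(S)$, which intertwines $\Delta_S^{(0)}$ with $\Delta_S^{(2)}$ and carries the scalar spherical means to the degree-$2$ ones; this yields $L_2=L_0$ in the formula. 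For $q=1$ both $\mathcal{S}_s\beta$ and $\hat{\mathcal{S}}_s\beta$ are genuinely present, but with $f=\hat f=\omega$ the off-diagonal contributions to $G$ and $\hat G$ reorganise: the combination $2\csc^2(\sqrt{k_S}s)-2\cos(\sqrt{k_S}s)\csc^2(\sqrt{k_S}s)$ equals $\frac{2(1-\cos(\sqrt{k_S}s))}{\sin^2(\sqrt{k_S}s)}=\frac{2}{1+\cos(\sqrt{k_S}s)}$, so that $G(\omega,\omega)=\hat G(\omega,\omega)$ is $L_0\omega$ plus precisely the zeroth-order term in the definition of $L_1$, i.e. $L_1\omega$. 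Collecting the three cases gives the stated identity.

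The \emph{main obstacle} lies in the first step, namely establishing the surface-level spherical-means identity: computing $\Delta_S$ of the radial superposition $\int(f\,\mathcal{S}_s+\hat f\,\hat{\mathcal{S}}_s)\beta\,d\mu_S$ in closed form. This requires the double-form calculus for $\tau_q,\hat\tau_q$ on $S$ — differentiating $\mathcal{S}_s\beta$ and $\hat{\mathcal{S}}_s\beta$ in the variable $x\in S$, using the structure equations satisfied by $d\,d'\rho$ on a space of constant curvature, and integrating by parts in $s$ against the degenerate radial weight of $d\mu_S$ to convert second derivatives in $x$ into the radial operator $L_0$ together with the curvature terms coupling $\mathcal{S}_s$ and $\hat{\mathcal{S}}_s$. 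Once that identity is available, deducing the Lemma is only the elementary bookkeeping of the second step. The point requiring care is the behaviour at the endpoints $s=0$ and $s=\text{diam}(S)$, where the radial weight vanishes: there the boundary terms created by the integrations by parts drop out, and the hypothesis $\omega\in H^2_{\text{loc}}((0,\text{diam}(S)),d\mu_S)$ is exactly what is needed for the identity to hold in the appropriate weak sense.
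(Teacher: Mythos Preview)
The paper does not prove this lemma; it is quoted from \cite{EK09} without argument. Your reconstruction is along the right lines and matches how the result is obtained there: one specialises the double-form/spherical-means calculus underlying Proposition~\ref{idspherm} (which goes back to \cite{G88}) to a simply connected Riemannian surface of constant curvature, and then collapses the resulting $(G,\hat G)$ system by taking $f=\hat f=\omega$, using $\hat\tau_0=0$ for $q=0$, Hodge duality for $q=2$, and for $q=1$ the trigonometric simplification $2\csc^2-2\cos\csc^2=\frac{2}{1+\cos}$ that you wrote out. Your identification of the main obstacle --- actually carrying out and justifying the double-form computation on $S$, together with the endpoint behaviour of $d\mu_S$ --- is accurate; that is precisely the content borrowed from \cite{G88,EK09}.

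One minor point worth flagging: your $q=1$ computation (and the specialisation of Proposition~\ref{idspherm} with $n=2$, $p=1$) produces the zeroth-order coefficient $\dfrac{2k_S}{1+\cos(\sqrt{k_S}s)}$, not $\dfrac{2\sqrt{k_S}}{1+\cos(\sqrt{k_S}s)}$. The $\sqrt{k_S}$ in the displayed definition of $L_1$ is a typographical slip in the present paper (dimensionally $L_0$ scales like $1/\text{length}^2$, as does $k_S$, whereas $\sqrt{k_S}$ does not), so the discrepancy is not a defect in your argument.
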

\noindent
The operators $L_q$ defined in the previous lemma are self-adjoint operators on the domains consisting of the functions  
$u \in H^1 ((0,\text{diam}(S)), d\mu_{S})$ such that $L_q u \in L^2((0,\text{diam}(S)),d\mu_{S})$, $\lim\limits_{s \rightarrow 0} s u'(s) = 0$ and
\[ \lim\limits_{s \rightarrow \text{diam}(S)} (\text{diam}(S) - s)u'(s) = 0 \quad \text{if} \quad k_S > 0 \quad \text{and} \quad q = 0,\]
We finally recall the following result given in \cite{EK09} concerning the spectral
decomposition of the operator $L_q$.

\begin{prop}[\cite{EK09}, Proposition 1]\label{specstudy}
 Let $c_{k_S,q} := -\frac{1}{4} k_S$ if $k_S < 0$ and 
 $c_{k_S,q} := 2 k_S \delta_{q 1}$ if $k_S > 0$. Then for each $q= 0, 1, 2$, there exists a Borel measure $\rho_q$ on
 $[c_{k_S,q}, \infty)$ and a $(\mu_{S} \times \rho_q)$-measurable function
 $\omega_q$ defined on $(0,\text{diam}(S)) \times [c_{k_S,q}, \infty)$ such that:
 \begin{enumerate}
  \item $\omega_q(\cdot,\lambda)$ is an analytic formal eigenfunction of $L_q$ with eigenvalue $\lambda$ for $\rho_q$-almost every
  $\lambda \in [c_{k_S,q}, \infty)$.
  \item The map
  \[ u \mapsto \int_{0}^{\text{diam}(S)} u(s) \overline{\omega_q(s,\cdot)} d \mu_{S}(s),\]
  defines a unitary transformation $U_q : L^2((0,\text{diam}(S)),d\mu_{S}) \rightarrow L^2([c_{k_S,q}, \infty), d\rho_q)$ with inverse given by
    \[ U_q^{-1} u := \int_{c_{k_S,q}}^{\infty} \omega_q(\cdot,\lambda) u(\lambda) d \rho_q (\lambda).\]
 \item If $g(L_q)$ is any bounded function of $L_q$, then
     \[ g(L_q) u = \int_{c_{k_S,q}}^{\infty} g(\lambda) \omega_q(\cdot,\lambda) U_q u(\lambda) d \rho_q (\lambda)\]
     in the sense of norm convergence.
 \end{enumerate}
\end{prop}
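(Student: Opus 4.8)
\emph{Proof proposal.} This is Proposition~1 of \cite{EK09}, so one may simply quote it; I nonetheless indicate the route I would take. Each $L_q$ is, after a change of dependent variable, an ordinary one-dimensional Schr\"odinger operator on $(0,\operatorname{diam}(S))$, and the three assertions together are precisely the content of the Weyl--Titchmarsh--Kodaira eigenfunction expansion theorem for a singular Sturm--Liouville operator. The plan is thus: (i) bring $L_q$ into Liouville normal form; (ii) classify the endpoints and identify the self-adjoint realization determined by the boundary conditions recalled just before the proposition; (iii) invoke the Weyl--Kodaira expansion theorem to produce $\rho_q$, the generalized eigenfunctions $\omega_q(\cdot,\lambda)$, and the unitary $U_q$, and read off the inversion formula and the functional calculus; (iv) compute the bottom $c_{k_S,q}$ of the spectrum.

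For step~(i), write $\varrho$ for the density of $d\mu_S$ with respect to $ds$; then $L_q = -\varrho^{-1}\partial_s(\varrho\,\partial_s) + W_q$ with $W_0=W_2=0$ and $W_1$ the extra term in the statement, and the substitution $v=\varrho^{1/2}u$ is a unitary map of $L^2((0,\operatorname{diam}(S)),d\mu_S)$ onto $L^2((0,\operatorname{diam}(S)),ds)$ conjugating $L_q$ into $-\partial_s^2+V_q$ for an explicit $V_q$. Since $\varrho(s)\sim c\,s$ as $s\to0$ (and $\sim c(\operatorname{diam}(S)-s)$ as $s\to\operatorname{diam}(S)$ when $k_S>0$), one finds $V_q(s)\sim-\tfrac14 s^{-2}$ near such a singular endpoint, so of the two solutions of $L_q u=\lambda u$ there, one stays bounded and the other diverges logarithmically; the requirement $\lim_{s\to0}s\,u'(s)=0$ selects the bounded one, and the analogous, standard limit-point/limit-circle analysis at the other endpoint (limit-point when $k_S<0$, and for $k_S>0$ depending on $q$, which is what produces the extra condition at $\operatorname{diam}(S)$ when $q=0$) pins down the self-adjoint operator appearing in the statement.

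For step~(iii), this self-adjoint operator is in the limit-circle case at $s=0$ (with the chosen boundary datum) and in the limit-point case at the far endpoint (or again limit-circle with the stated condition when $k_S>0$, $q=0$), so the Weyl--Kodaira expansion theorem (e.g.\ Dunford--Schwartz, Ch.~XIII, or Weidmann) applies directly: it furnishes a Borel measure $\rho_q$ on $\R$ and, for $\rho_q$-a.e.\ $\lambda$, the essentially unique real solution $\omega_q(\cdot,\lambda)$ of $L_q\omega_q(\cdot,\lambda)=\lambda\,\omega_q(\cdot,\lambda)$ satisfying the boundary condition at $0$, normalized so that $u\mapsto\int_0^{\operatorname{diam}(S)}u(s)\overline{\omega_q(s,\cdot)}\,d\mu_S(s)$ extends to a unitary $U_q$ from $L^2((0,\operatorname{diam}(S)),d\mu_S)$ onto $L^2(\R,d\rho_q)$ intertwining $L_q$ with multiplication by $\lambda$; assertions~2 and~3 are then the inversion formula for $U_q$ and the spectral theorem carried through $U_q$. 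Analyticity of $\lambda\mapsto\omega_q(s,\lambda)$ is automatic, since the coefficients of $L_q-\lambda$ depend polynomially on $\lambda$ and the initial datum at $s=0$ may be taken $\lambda$-independent, so $\omega_q(s,\cdot)$ is entire by the classical theory of linear ODEs depending analytically on a parameter.

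Finally, step~(iv) requires identifying $\operatorname{supp}\rho_q=\operatorname{spec}(L_q)=[c_{k_S,q},\infty)$, and this is the step I expect to carry the real work. When $k_S<0$ the Liouville potential $V_q$ tends to $-\tfrac14 k_S$ at infinity (the extra term, when present, being bounded and decaying), so the essential spectrum is $[-\tfrac14 k_S,\infty)$; one rules out spectrum below by a ground-state / Hardy-type inequality on the plane of curvature $k_S$, giving $c_{k_S,q}=-\tfrac14 k_S$. When $k_S>0$ the operator has compact resolvent, hence purely discrete spectrum, which one identifies with the known spectrum of the Hodge $q$-Laplacian on $\mathbb{S}^2$ of curvature $k_S$ restricted to the radial sector, namely $\{l(l+1)k_S : l\ge0\}$ for $q=0,2$ and $\{l(l+1)k_S : l\ge1\}$ for $q=1$, whose bottoms are $0$ and $2k_S$; $\operatorname{supp}\rho_q$ is then this discrete set. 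The main obstacle is precisely this bookkeeping — tracking the effect of the extra potential $W_1$ in the $q=1$ case, getting the endpoint classification right, and checking that $\rho_q$ carries no mass strictly below $c_{k_S,q}$; everything else is a routine, if lengthy, application of standard singular Sturm--Liouville theory.
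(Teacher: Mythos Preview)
Your proposal is correct and follows the same route as the paper, which does not prove the proposition but records (in the remark immediately following it) that the proof in \cite{EK09} ``relies on the Weyl--Kodaira theorem and some properties of hypergeometric functions.'' The only difference is one of explicitness: \cite{EK09} solves $L_q u=\lambda u$ in closed form via hypergeometric functions and reads off $\omega_q$, $\rho_q$, and the spectral bottom $c_{k_S,q}$ directly from those formulas, whereas you argue more abstractly (Liouville normal form, endpoint classification, asymptotics of the potential) to reach the same conclusions; note also that the analyticity asserted in item~1 is in the spatial variable $s$, not in $\lambda$, though this too follows from standard ODE theory since the coefficients of $L_q$ are analytic on $(0,\operatorname{diam}(S))$.
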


\begin{remark}
 \begin{enumerate}
  \item The proof of this proposition is given in Appendix $A$ of
  \cite{EK09}. It relies on the Weyl-Kodaira theorem and some properties of hypergeometric functions.
  \item Explicit formulas for all the functions involved in the previous proposition can be found in Lemmas $2$ and $3$ of \cite{EK09}.
 \end{enumerate}
\end{remark}

We note that the decomposition
\[ \Omega^{\star}(M \times S) = \bigoplus_{p,q \geq 0} \Omega^p(M) \otimes \Omega^q(S),\]
defines a natural action of the Laplacians and spherical means operators on $M$ and $S$ on $\Omega^{\star}(M \times S)$. Keeping in mind this idea we now follow Section $5$ of \cite{EK09}. We then analyze the system of ordinary differential equations
\begin{equation}\label{eq1F}
 F(f_{\alpha}(\cdot,\lambda), \hat{f}_{\alpha}(\cdot,\lambda))(r) + \lambda f_{\alpha}(r,\lambda) = C_{\alpha-2} r^{\alpha-n-2},
\end{equation}
and
\begin{equation}\label{eq2F}
 \hat{F}(f_{\alpha}(\cdot,\lambda), \hat{f}_{\alpha}(\cdot,\lambda))(r) + \lambda \hat{f}_{\alpha}(r,\lambda) = C_{\alpha-2} r^{\alpha-n-2},
\end{equation}
where $\lambda$ is positive and we assume that $\text{Re}(\alpha) > n +2$. By using the new variable $z := \sin^2 \left( \frac{\sqrt{k_M} r}{2} \right)$ and introducing
the quantity $l := \frac{\lambda}{k_M}$, Equations (\ref{eq1F})-(\ref{eq2F}) give rise to a single fourth order equation for $f_{\alpha}$, namely
\begin{equation}\label{eqfinalF}
 \frac{d^4 f_{\alpha}}{dz^4} + \sum_{j=0}^3 Q_j(z) \frac{d^j f_{\alpha}}{dz^j} = H_{\alpha}(z),
\end{equation}
where $Q_j$ and $H_{\alpha}$ are explicitly given in \cite{EK09}.

\begin{prop}[\cite{EK09}, Proposition 3]\label{solphi}
Equation (\ref{eqfinalF}) has a unique solution $\Phi_{\alpha,+}(z,l)$ which is continuous in $[0,1]$ if $k_M > 0$ and a unique solution $\Phi_{\alpha,-}(z,l)$ which
is continuous in $(-\infty,0]$ and has the fastest possible decay at infinity if $k_M < 0$. These solutions are real for real $\alpha$.
\end{prop}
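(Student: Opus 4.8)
The plan is to follow the strategy of \cite{EK09}, Proposition 3, adapting it to the presence of the extra positive parameter $\lambda$ coming from the spectrum on $\S^2$. Equation (\ref{eqfinalF}) is a fourth-order linear ODE in $z$ whose coefficients $Q_j(z)$ have regular singular points at $z=0$ and $z=1$ (corresponding to $r=0$, where the Riesz distribution $r^{\alpha-n-2}$ is integrable for $\mathrm{Re}(\alpha)>n+2$, and to $r = \pi/\sqrt{k_M}$ when $k_M>0$, the antipodal point) and, when $k_M<0$, a singularity at $z = -\infty$ (corresponding to $r\to\infty$). First I would carry out the Frobenius analysis at $z=0$: compute the indicial equation, exhibit its four roots, and show that exactly one of the corresponding Frobenius solutions extends to a function continuous at $z=0$, while the other three exhibit genuine singular behaviour (logarithmic terms or negative/fractional powers of $z$ that are not removable). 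The forcing term $H_\alpha(z)$, being built from $C_{\alpha-2} r^{\alpha-n-2}$, contributes a particular solution that is itself continuous at $z=0$ for $\mathrm{Re}(\alpha)>n+2$, so continuity at the origin pins down the solution up to the three-parameter family of homogeneous solutions that vanish to the appropriate order there.

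Next I would handle the second boundary condition depending on the sign of $k_M$. In the case $k_M>0$ the domain is the compact interval $[0,1]$, and one performs the same Frobenius analysis at the second regular singular point $z=1$: imposing continuity there cuts down the remaining freedom, and one checks — exactly as in \cite{EK09} — that the requirements of continuity at both endpoints force a \emph{unique} solution $\Phi_{\alpha,+}(z,l)$. In the case $k_M<0$, relevant to our setting since $M = AdS^5$ has $k_M = -\kappa$, the domain is $(-\infty,0]$ and one analyzes the behaviour at $z\to-\infty$: the homogeneous equation has a basis of solutions with distinct exponential/power decay rates, and selecting the solution with the fastest possible decay at infinity together with continuity on $(-\infty,0]$ again isolates a unique $\Phi_{\alpha,-}(z,l)$. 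Reality for real $\alpha$ follows because the ODE has real coefficients and real-analytic forcing when $\alpha$ is real, so the selected solution, being characterized by real-analytic boundary/decay conditions, must coincide with its complex conjugate.

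The key point that makes the argument go through with the extra constant is that adding $\lambda > 0$ to the Hodge Laplacian only shifts the parameter $l = \lambda/k_M$ and modifies the lower-order coefficients $Q_0, \dots, Q_3$ and the forcing $H_\alpha$, but changes neither the location nor the regular-singular nature of the singularities, nor the leading symbol; hence the indicial equations at $z=0$ and $z=1$ are unchanged, and the whole selection mechanism of \cite{EK09} applies verbatim. I expect the main obstacle to be the verification, in the $k_M<0$ case, that ``fastest possible decay at infinity'' genuinely singles out a one-dimensional space of homogeneous solutions and that this is compatible with continuity on all of $(-\infty,0]$ — i.e. that the decaying solution does not develop a singularity at an interior point — which requires a careful connection-formula argument relating the behaviour at $z=0$ to that at $z=-\infty$, together with the observation that $\lambda \ge c_{k_S,q}$ keeps us away from any exceptional resonant values of $l$.
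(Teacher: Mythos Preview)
The paper does not prove this proposition at all: it is quoted verbatim from \cite{EK09}, Proposition 3, with no argument supplied. Your sketch of a Frobenius analysis at the regular singular points $z=0$ and $z=1$ (respectively at $z\to-\infty$ when $k_M<0$), together with the selection principle based on continuity and fastest decay, is a plausible outline of how the original proof in \cite{EK09} proceeds, but it goes well beyond what the present paper does, which is simply to invoke the result.

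One small misalignment: you frame your proposal as ``adapting \cite{EK09} to the presence of the extra positive parameter $\lambda$'', but at the level of Proposition~\ref{solphi} no adaptation is needed --- the parameter $l=\lambda/k_M$ is already present in \cite{EK09}, and the proposition is quoted without change. The actual modification for the additional constant $\gamma^{(k)}$ coming from the $\S^3$ spectrum occurs later in the paper, in the Remark following Proposition~\ref{propsolPois1}, where the authors observe that the argument of \cite{EK09} ``applies verbatim when replacing $l=\lambda/k_M$ by $l=(\lambda+\gamma)/k_M$''. Your final paragraph correctly captures this point (the shift affects only lower-order coefficients and not the indicial structure), but it belongs to that later step rather than to the proof of Proposition~\ref{solphi} itself.
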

\noindent
We now follow the strategy of solution of the inhomogeneous problem given in Section $5$ of \cite{EK09} but keeping in mind that instead of (\ref{inhomo}) we study
the problem (\ref{inhomonous}) coming from (\ref{cauwavpart2}), i.e. we study the operator
\[ \Delta_{M \times \Sigma} + \gamma,\]
for a nonnegative $\gamma$ instead of $\Delta_{M \times \Sigma}$. Let us thus introduce the
$\Lambda^p(T_x^{\star} M)$-valued distribution defined for each $x \in D$ as
\begin{eqnarray*}
 \tilde{\mathcal{R}}_{x,D}^{\alpha}(\phi) &:=& C_{\alpha} \int r^{\alpha-n} \left( \mathcal{M}_r + \hat{\mathcal{M}}_r \right) \phi(x) d \mu_M (r) \\
 &=& C_{\alpha} \int_{J_D^+(x)} \rho(x,x')^{\alpha-n} \left( \tau(x,x') + \hat{\tau}(x,x') \right) \cdot' \phi(x') dx',
\end{eqnarray*}
where $\phi \in \Omega_0^p(M)$. For fixed $x$ and $\lambda$, this distribution is holomorphic for $\text{Re}(\alpha) > n$. The function
$\alpha \mapsto \tilde{\mathcal{R}}_{x,D}^{\alpha}$ has an entire extension and as $\tau(x,x') + \hat{\tau}(x,x') = \text{id}_{\Lambda^p(T_x^{\star}M)}$
we can deduce from Theorem \ref{OldTheorem1} that
\begin{equation}\label{eqtildeR}
 \tilde{\mathcal{R}}_{x,D}^{0} (\phi) = \phi(x).
\end{equation}
We now set
\[ f_{\alpha}(r,\lambda) := \Phi_{\alpha,\text{sign}(k_M)} \left( \sin^2\left(\frac{\sqrt{k_M}r}{2} \right), \frac{\lambda}{k_M} \right),\]
with $\Phi_{\alpha,\pm}$ as in Proposition \ref{solphi} and define $\hat{f}_{\alpha}$ thanks to (\ref{eq1F})-(\ref{eq2F}) in the following way
\[ \hat{f}_{\alpha}(r,\lambda) := \frac{L_M f_{\alpha}(r,\lambda) + pk_M \left( 2 \csc^2 (\sqrt{k_M}r) + n - p - 1 \right) f_{\alpha}(r,\lambda) - C_{\alpha-2}r^{\alpha - n -2}}
 {2 pk_M \cos(\sqrt{k_M}r) \csc^2(\sqrt{k_M}r)}.\]
We then define another vector-valued distribution by
\[ \mathcal{R}_{x,D}^{\alpha,\lambda} (\phi) :=
\int \left( f_{\alpha}(r,\lambda) \mathcal{M}_r + \hat{f}_{\alpha}(r,\lambda) \hat{\mathcal{M}}_r \right) \phi(x) d \mu_M(r), \quad \phi \in \Omega_0^p(M),\]
for $\text{Re}(\alpha) > n-2$. Thanks to Proposition \ref{idspherm}, (\ref{eq1F})-(\ref{eq2F}) and (\ref{eqtildeR}) it follows that the previous distributions satisfy
\[ (\Delta_M + \lambda + \gamma) \mathcal{R}_{x,D}^{\alpha,\lambda+\gamma} = \tilde{\mathcal{R}}_{x,D}^{\alpha-2}.\]
The function $\alpha \mapsto  \mathcal{R}_{x,D}^{\alpha,\lambda+\gamma}$ admits an holomorphic extension to $\mathbb{C}$ (see for instance \cite{BGP07,R49}).
We then have the following proposition:

\begin{prop}[\cite{EK09}, Proposition 4]\label{propsolPois1}
 The distribution $\mathcal{R}_{x,D}^{\alpha,\lambda+\gamma}$ is a holomorphic function of $\alpha \in \C$ for fixed $x \in D$, $\lambda \in \text{spec}(L_q)$
 and $\gamma \geq 0$. Moreover, for any $\phi \in \Omega_0^p(D)$ the differential form $\Phi(x) := \mathcal{R}_{x,D}^{2,\lambda+\gamma}(\phi)$ solves the equation
 \[ (\Delta_M + \lambda + \gamma) \Phi = \phi, \quad \text{in} \quad D.\]
\end{prop}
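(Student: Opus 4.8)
The plan is to follow, step by step, the proof of Proposition 4 in \cite{EK09}, checking at each stage that the extra nonnegative constant $\gamma$ does nothing harmful, since it enters the radial system (\ref{eq1F})--(\ref{eq2F}), and hence the fourth-order equation (\ref{eqfinalF}), only additively in the coefficient of $f_{\alpha}$ (equivalently through the replacement $\lambda \mapsto \lambda + \gamma$, i.e. $l \mapsto \frac{\lambda + \gamma}{k_M}$), without altering the indicial structure at the singular point. Two assertions must be established: the holomorphy in $\alpha \in \C$ of the $\Lambda^p(T_x^{\star}M)$-valued distribution $\mathcal{R}_{x,D}^{\alpha,\lambda+\gamma}$, and the fact that, at $\alpha = 2$, pairing it with $\phi \in \Omega_0^p(D)$ produces a form solving $(\Delta_M + \lambda + \gamma)\Phi = \phi$ in $D$.

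First I would pin down the behaviour of the radial weights near $r = 0$. By Proposition \ref{solphi} the function $\Phi_{\alpha,\mathrm{sign}(k_M)}$ depends holomorphically on $\alpha$, and combining the defining relations (\ref{eq1F})--(\ref{eq2F}) with the explicit inhomogeneity $C_{\alpha-2}r^{\alpha-n-2}$ one reads off that $f_{\alpha}(r,\lambda+\gamma)$ and the algebraically defined $\hat{f}_{\alpha}(r,\lambda+\gamma)$ have, as $r \to 0$, a leading contribution proportional to $C_{\alpha}r^{\alpha-n}$ together with terms regular in the relevant sense; this is precisely the singularity structure of the scalar Riesz kernel and is insensitive to the shift $\lambda \mapsto \lambda + \gamma$. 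Since $D$ is causal, $J_D^+(x)$ is relatively compact and the spherical means $\mathcal{M}_r\phi(x)$, $\hat{\mathcal{M}}_r\phi(x)$ are smooth in $(x,r)$ with compactly supported $r$-dependence, so for $\mathrm{Re}(\alpha) > n - 2$ the integral defining $\mathcal{R}_{x,D}^{\alpha,\lambda+\gamma}(\phi)$ converges absolutely, depends holomorphically on $\alpha$ there, and yields a smooth $p$-form in $x$ when $\mathrm{Re}(\alpha)$ is large. The holomorphic continuation of $\alpha \mapsto \mathcal{R}_{x,D}^{\alpha,\lambda+\gamma}$ to all of $\C$ then proceeds exactly as for ordinary Riesz distributions (\cite{R49,G88,BGP07}): one integrates by parts in $r$ against the radial Laplacian $L_M$, using (\ref{eq1F})--(\ref{eq2F}), to trade powers of $r$ for a rational factor in $\alpha$ that is cancelled by the $1/\Gamma$-factors contained in $C_{\alpha}$, and iterates; the regularization keeps smoothness in $x$ at every stage.

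Next I would upgrade the radial identity to the PDE. As recorded just above the statement, Proposition \ref{idspherm}, the ODEs (\ref{eq1F})--(\ref{eq2F}) and the normalization (\ref{eqtildeR}) yield, for $\mathrm{Re}(\alpha) > n - 2$,
\[ (\Delta_M + \lambda + \gamma)\,\mathcal{R}_{x,D}^{\alpha,\lambda+\gamma} = \tilde{\mathcal{R}}_{x,D}^{\alpha-2}, \]
an equality of $\Lambda^p(T_x^{\star}M)$-valued distributions, equivalently $(\Delta_M + \lambda + \gamma)\mathcal{R}_{x,D}^{\alpha,\lambda+\gamma}(\phi)(x) = \tilde{\mathcal{R}}_{x,D}^{\alpha-2}(\phi)(x)$ for each $\phi \in \Omega_0^p(D)$. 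Both sides are holomorphic in $\alpha \in \C$: the right-hand side because $\alpha \mapsto \tilde{\mathcal{R}}_{x,D}^{\alpha}$ is entire (noted in the text), and the left-hand side because applying the fixed differential operator $\Delta_M + \lambda + \gamma$, a continuous linear map on smooth forms, to the holomorphic family built in the previous step again gives a holomorphic family. Hence the identity persists for all $\alpha \in \C$ by analytic continuation. Setting $\alpha = 2$ and invoking (\ref{eqtildeR}),
\[ (\Delta_M + \lambda + \gamma)\,\mathcal{R}_{x,D}^{2,\lambda+\gamma}(\phi) = \tilde{\mathcal{R}}_{x,D}^{0}(\phi) = \phi(x). \]
Finally, since $\phi$ is smooth and compactly supported and $\mathcal{R}_{x,D}^{2,\lambda+\gamma}$ is the advanced Green kernel just constructed on the causal domain $D$, the form $\Phi(x) = \mathcal{R}_{x,D}^{2,\lambda+\gamma}(\phi)$ is smooth in $x$, so $(\Delta_M + \lambda + \gamma)\Phi = \phi$ holds classically in $D$.

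The real work sits entirely in the second paragraph: controlling, uniformly in $\alpha$ on vertical strips, the behaviour of $\Phi_{\alpha,\pm}\!\left(z,\frac{\lambda+\gamma}{k_M}\right)$ and of $\hat{f}_{\alpha}$ as $z \to 0$ sharply enough to (i) secure absolute convergence on $\mathrm{Re}(\alpha) > n - 2$, (ii) run the integration-by-parts continuation with correct bookkeeping of the $\Gamma$-factors, and (iii) ensure the continued family is regular enough at $\alpha = 2$ to apply $\Delta_M$ pointwise. These are exactly the estimates behind Propositions 3 and 4 of \cite{EK09}, and the only genuinely new verification is that $\gamma \ge 0$ leaves them intact — which it does, because $\gamma$ enters the radial problem only as an additive perturbation of a zeroth-order coefficient and therefore affects neither the leading $r \to 0$ asymptotics nor the location of the relevant singular point of (\ref{eqfinalF}).
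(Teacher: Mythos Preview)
Your proposal is correct and follows essentially the same approach as the paper: the paper's own justification (given in the Remark following the statement and in the preceding discussion) consists precisely of citing \cite{EK09}, noting the key identity $(\Delta_M + \lambda + \gamma)\mathcal{R}_{x,D}^{\alpha,\lambda+\gamma} = \tilde{\mathcal{R}}_{x,D}^{\alpha-2}$ obtained from Proposition~\ref{idspherm} together with (\ref{eq1F})--(\ref{eq2F}) and (\ref{eqtildeR}), invoking \cite{BGP07,R49} for the entire extension in $\alpha$, and observing that since $\gamma \ge 0$ enters only through the substitution $l \mapsto (\lambda+\gamma)/k_M$, the proof of Proposition~3 of \cite{EK09} applies verbatim. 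Your write-up supplies more detail on the analytic continuation mechanism than the paper does, but the strategy is identical.
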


\begin{remark}
 In \cite{EK09} this result was obtained in the case $\gamma = 0$. Nevertheless, we can still obtain the result as stated in Proposition \ref{propsolPois1}, for nonnegative
 $\gamma$ thanks to the previous analysis. We emphasize that the scalar $\gamma$ is an eigenvalue of the Hodge Laplacian on the $3$-sphere and is 
 thus nonnegative by the structure of the spectrum of the Hodge Laplacian on $\S^3$, see Appendix \ref{apspeclap}. Therefore, the proof of Proposition $3$ of \cite{EK09} applies verbatim
 when replacing $l = \frac{\lambda}{k_M}$ by $l = \frac{ \lambda + \gamma}{k_M}$.
\end{remark}
\noindent
We note that the action of $\mathcal{R}_{x,D}^{\alpha,\lambda+\gamma}$ on $\omega \in \Omega^p(D) \otimes \Omega^q(S)$ defines a bundle-valued distribution on $M \times S$, which acts on the first factor of $\omega$. More precisely, if $\omega(x,y) = \omega_1(x) \otimes
\omega_2(y)$ this action is given by
\[ \mathcal{R}_{x,D}^{\alpha,\lambda+\gamma} (\omega)(y) := \mathcal{R}_{x,D}^{\alpha,\lambda+\gamma}(\omega_1) \otimes \omega_2 (y),\]
with $(x,y) \in D \times S$. Using this result, the solution of (\ref{inhomonous}) is obtained:

\begin{theorem}[\cite{EK09}, Theorem 3]\label{soleqinhomo}
 The differential form
 \[F(x,y) := \int \overline{\omega_q(0,\lambda)} \omega_q(s,\lambda) \mathcal{R}_{x,D}^{2,\lambda+\gamma} (\omega)(y)   d \mu_M (r) d \mu_{S}(s) d\rho_q(\lambda), \]
where the integral ranges over $[c_{k_S,q},\infty) \times (0,\text{diam}(S)) \times (0,\text{diam}(M))$, solves Equation (\ref{inhomonous}) for any compactly
supported $\omega \in \Omega^p(D) \otimes \Omega^q(S)$.
 \end{theorem}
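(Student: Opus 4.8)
The plan is to assemble the solution of the inhomogeneous equation $(\Delta_{M\times S}^{(k)}+\gamma^{(k)})F=\omega$ by superimposing, in the variable on the surface factor $S$, the family of solutions of the ``$M$-part'' equations produced in Proposition \ref{propsolPois1}, exactly as in Theorem 3 of \cite{EK09} but carrying the extra nonnegative constant $\gamma$ through every step. First I would reduce to the case of a decomposable form $\omega(x,y)=\omega_1(x)\otimes\omega_2(y)$ with $\omega_1\in\Omega_0^p(D)$ and $\omega_2\in\Omega_0^q(S)$, the general case following by linearity and a density/continuity argument since the integral operators involved are continuous on compactly supported forms. For such an $\omega$, expand $\omega_2$ in the spectral representation of $L_q$ furnished by Proposition \ref{specstudy}: writing $U_q\omega_2(\lambda)$ for its transform, one has $\omega_2(s)=\int \omega_q(s,\lambda)\,U_q\omega_2(\lambda)\,d\rho_q(\lambda)$, and in particular $\omega_2(0)=\int\omega_q(0,\lambda)\,U_q\omega_2(\lambda)\,d\rho_q(\lambda)$, so $\overline{\omega_q(0,\lambda)}$ is precisely the kernel needed to recover the boundary/diagonal value.

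Next I would verify that the proposed $F$ satisfies the equation by applying $\Delta_{M\times S}^{(k)}+\gamma^{(k)}=\Delta_M+\Delta_S+\gamma$ under the integral sign. The key point is that the distribution $\mathcal{R}_{x,D}^{2,\lambda+\gamma}$ acts only on the $M$-factor and, by Proposition \ref{propsolPois1}, satisfies $(\Delta_M+\lambda+\gamma)\mathcal{R}_{x,D}^{2,\lambda+\gamma}(\omega_1)=\omega_1$ in $D$; meanwhile $\omega_q(\cdot,\lambda)$ is a formal eigenfunction of $L_q$ with eigenvalue $\lambda$, and the Lemma of \cite{EK09} recalled above identifies the action of $\Delta_S$ on integrals against $(\mathcal{S}_s+\hat{\mathcal{S}}_s)$-type spherical means with the action of $L_q$ on the radial profile. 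Hence applying $\Delta_S$ to the $s$-integral brings down a factor $\lambda$, applying $\Delta_M$ brings $(\Delta_M+\lambda+\gamma)\mathcal{R}^{2,\lambda+\gamma}=\mathrm{id}$ and leaves $-\lambda-\gamma$ worth of terms, and the net effect is that the operator $\Delta_M+\Delta_S+\gamma$ collapses the triple integral to $\int\overline{\omega_q(0,\lambda)}\,\omega_q(s,\lambda)\,\big(\text{reconstruction of }\omega\big)\,d\mu_S(s)\,d\rho_q(\lambda)$; using that $\omega_q(\cdot,\lambda)$ reconstructs $\omega_2$ and evaluating the delta-type behaviour of $\mathcal{R}^{2,\lambda+\gamma}$ at $\alpha=2$ via \eqref{eqtildeR} recovers $\omega_1(x)\otimes\omega_2(y)=\omega(x,y)$. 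I would present this as a chain of equalities, being careful to keep the $d\mu_M(r)$ integration (which is implicit in the definition of $\mathcal{R}_{x,D}^{\alpha,\lambda}$ through the spherical means $\mathcal{M}_r,\hat{\mathcal{M}}_r$) honest throughout.

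The main obstacle I anticipate is justifying the interchange of $\Delta_{M\times S}$ with the triple integral over $[c_{k_S,q},\infty)\times(0,\mathrm{diam}(S))\times(0,\mathrm{diam}(M))$, that is, the convergence and differentiability of the spectral integral in $\lambda$. Since $\omega$ is compactly supported in the causal domain $D\times S$ and the integrations in $r$ and $s$ are effectively over compact sets (radii bounded by the diameters, and by causality), the only genuine issue is the tail in $\lambda$: here one invokes, as in \cite{EK09}, the decay properties of $\Phi_{\alpha,-}$ from Proposition \ref{solphi} (recall $k_M=-\kappa<0$ in our setting, so we are in the ``fastest decay at infinity'' branch) together with the rapid decay of $U_q\omega_2(\lambda)$ guaranteed by the smoothness of $\omega_2$ and the unitarity in Proposition \ref{specstudy}, plus the fact that $\lambda+\gamma\ge c_{k_S,q}+\gamma$ stays in the region where Proposition \ref{propsolPois1} applies because $\gamma\ge 0$. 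Granting these estimates — which are exactly those established in \cite{EK09} with $l=\lambda/k_M$ replaced by $l=(\lambda+\gamma)/k_M$ — the differentiation under the integral sign is legitimate and the computation above goes through, so the proof reduces to citing \cite{EK09}, Theorem 3 and remarking that the addition of the constant $\gamma^{(k)}$ changes nothing essential since it merely shifts the spectral parameter by a nonnegative amount that keeps all formal eigenfunctions and all convergence estimates in force.
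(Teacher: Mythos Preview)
Your proposal is correct and follows exactly the approach the paper takes: the paper's entire proof is the single sentence ``The proof is identical to the one given for Theorem~3 of \cite{EK09}, with $\lambda$ replaced by $\lambda+\gamma$,'' and your sketch spells out precisely that argument, carrying the nonnegative shift $\gamma$ through the spectral decomposition in $L_q$, Proposition~\ref{propsolPois1}, and the convergence estimates. If anything, you have supplied more detail than the paper itself, but the route is the same.
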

The proof is identical to the one given for Theorem 3 of \cite{EK09}, with $\lambda$ replaced by $\lambda+\gamma$. 

\subsection{Construction of a smooth differential form matching the initial data}\label{secformpart}

The aim of this Section is to construct a $2$-form $\hat{F}_k := \hat{F}_{\beta_k}$ on the causal domain $\tilde{\mathcal{D}} = \mathcal{D}_{| AdS^5 \times \S^2}$
which is contained in a geodesically normal domain of $ AdS^5 \times \S^2$
such that $(\Delta_{\tilde{\mathcal{D}}}^{(k)} + \gamma^{(k)})(\hat{F}_k)$ vanishes at infinite order along the Cauchy surface
$\Sigma_{\tilde{\mathcal{D}}}$ which is the restriction of $\Sigma_{\mathcal{D}}$ on $AdS^5 \times \S^2$.
Such a $2$-form has been already constructed in \cite{BGP07,G88} and we follow here these constructions.

In our analysis, we trivialize the bundle $E$
over $\tilde{\mathcal{D}}$ and identify sections on $E$ with $\C^r$-valued functions where $r$ is the rank of $E$.
We also introduce the function $\beta : \tilde{\mathcal{D}} \rightarrow \R_+^{\star}$ such that the metric takes the form $-\beta dt^2 + g_t$, i.e., by (\ref{metads5}), that is
\[ \beta (x) = \frac{1}{\kappa \sin(x)^2}.\]

We now follow the procedure of \cite{BGP07}, Proposition 3.2.5. Assume first that $F_k$ is a solution to the Cauchy problem (\ref{cauwavpart2}):
\begin{equation}\label{caupropart}
  \left \{
\begin{array}{ccc}
    &(\Delta_{\tilde{\mathcal{D}}}^{(k)} + \gamma^{(k)})F_k = 0& \\
    F_{k \, \,| \Sigma_{\tilde{\mathcal{D}}}} = \mathcal{F}_{0, \, k} & \text{and} & \nabla_{n} F_{k \, \, |\Sigma_{\tilde{\mathcal{D}}}} = \mathcal{F}_{1, \, k}\\
\end{array},
\right.
\end{equation}
of the form
\[ F_k(t,x) = \sum_{j=0}^{\infty} t^j f_{j,k}(x),\]
where $x \in \Sigma_{\tilde{\mathcal{D}}}$. We write
\[ \Delta_{\tilde{\mathcal{D}}}^{(k)} + \gamma^{(k)} = \frac{1}{\beta} \frac{\partial^2}{\partial t^2} + Y,\]
where $Y$ is a differential operator containing $t$-derivatives only up to order $1$. Therefore, the equation
\begin{equation}\label{equ2}
  0 = (\Delta_{\tilde{\mathcal{D}}}^{(k)} + \gamma^{(k)})F_k = \left( \frac{1}{\beta} \frac{\partial^2}{\partial t^2} + Y \right) F_k
 = \frac{1}{\beta} \sum_{j=0}^{\infty} j(j-1)t^{j-2} f_{j,k} + Yu,
\end{equation}
 evaluated at $t=0$ gives
\[ \frac{2}{\beta(x)} f_{2,k}(x) = -Y(\mathcal{F}_{0, \, k}+t \mathcal{F}_{1, \, k})(0,x),\]
for $x \in \Sigma_{\tilde{\mathcal{D}}}$. We see that $f_{2,k}$ is determined by $f_{0,k} := \mathcal{F}_{0, \, k}$ and $f_{1,k} :=\mathcal{F}_{1, \, k}$.
We can then differentiate (\ref{equ2}) with respect to $t$ and repeat the same procedure to prove that each
$f_{j,k}$ can be recursively determined by $f_{0,k}$,...,$f_{j-1,k}$.\\
\noindent
We now do not assume anymore that $F_k$ is a $t$-power series but we still define the $f_{j,k}$, $j \geq 2$, by these recursive relations. Then,
$\text{supp}(f_{j,k}) \subset \text{supp}(\mathcal{F}_{0, \, k}) \cup \text{supp}(\mathcal{F}_{1, \, k})$ for all $j$. Let $\sigma : \R \rightarrow \R$ be a smooth
function such that $\sigma_{| [-1/2,1/2]} \equiv 1$ and $\sigma \equiv 0$ outside $[-1,1]$. We can find a sequence $\epsilon_j \in (0,1)$ such that
\[ \hat{F}_k(t,x) := \sum_{j=0}^{\infty} \sigma \left( \frac{t}{\epsilon_j} \right) t^j f_{j,k}(x),\]
defines a smooth section with compact support that can be differentiating term-wise and by construction one sees that
$\text{supp}(\hat{F}_k) \subset J_{\tilde{\mathcal{D}}}(\text{supp}(\mathcal{F}_{0, \, k}) \cup \text{supp}(\mathcal{F}_{1, \, k}))$
(see \cite{BGP07}).
The form $\hat{F}_k$ is then the form we are interested in since, by the choice of $f_{j,k}$, the section $(\Delta_{\tilde{\mathcal{D}}}^{(k)} + \gamma^{(k)})(\hat{F}_k)$
vanishes to infinite order along $\Sigma_{\tilde{\mathcal{D}}}$.

\subsection{Solution of the sequence of Cauchy problems}\label{secprepar}

The aim of this Section is to use Sections \ref{secpbin} and \ref{secformpart} in order to construct an explicit solution $F_k = F_{\beta_k}$ of the Cauchy problem (\ref{cauwavpart2}), which we recall:
\begin{equation}\label{caupropart2}
  \left \{
\begin{array}{ccc}
    &(\Delta_{\tilde{\mathcal{D}}}^{(k)} + \gamma^{(k)})F_k = 0& \\
    F_{k \, \, | \Sigma_{\tilde{\mathcal{D}}}} = \mathcal{F}_{0, \, k} & \text{and} & \nabla_{n} F_{k \, \, |\Sigma_{\tilde{\mathcal{D}}}} = \mathcal{F}_{1, \, k}\\
\end{array}.
\right.
\end{equation}
To this effect, we follow the procedure of \cite{G88} also used in \cite{BGP07}. First, we recall that, thanks to the results of Section \ref{secformpart}, we know that
$(\Delta_{\tilde{\mathcal{D}}}^{(k)} + \gamma^{(k)})(\hat{F}_k)$ vanishes along $\Sigma_{\tilde{\mathcal{D}}}$ to infinite order. Let us thus set
\begin{equation}\label{defg1}
  G_{+, \, k}(t,x) = \left \{
\begin{array}{ccc}
    (\Delta_{\tilde{\mathcal{D}}}^{(k)} + \gamma^{(k)})(\hat{F}_k) \quad &\text{if}& \quad x \in J_{\tilde{\mathcal{D}}}^+(\Sigma_{\tilde{\mathcal{D}}}) \\
    0 \quad &\text{if}& \quad x \in J_{\tilde{\mathcal{D}}}^-(\Sigma_{\tilde{\mathcal{D}}}) \\
\end{array},
\right.
\end{equation}
which defines a smooth section. We now introduce the inhomogeneous problem
\begin{equation}\label{inhomeq}
(\Delta_{\tilde{\mathcal{D}}}^{(k)} + \gamma^{(k)})(H_{+, \, k}) = G_{+, \, k}.
\end{equation}
Then, using the results of Section \ref{secpbin}, we can explicitly construct a smooth solution
$H_{+, \, k} \in \Omega^2(J_{\tilde{\mathcal{D}}}^+(\Sigma_{\tilde{\mathcal{D}}}))$ of the above problem and this
solution satisfies $\text{supp}(H_{+,\, k}) \subset \text{supp}(G_{+, \, k}) \subset J_{\tilde{\mathcal{D}}}^+(\Sigma_{\tilde{\mathcal{D}}})$.\\
\noindent
We now consider the ``future'' Cauchy problem on $\Omega^2(J_{\tilde{\mathcal{D}}}^+(\Sigma_{\tilde{\mathcal{D}}}))$:
\begin{equation}\label{futcau}
  \left \{
\begin{array}{ccc}
    &(\Delta_{\tilde{\mathcal{D}}}^{(k)} + \gamma^{(k)})F_{+, \, k} = 0& \\
    F_{+, \, k \, \, | \Sigma_{\tilde{\mathcal{D}}}} = \mathcal{F}_{0, \, k} & \text{and} & \nabla_{n} F_{+, \, k \, \,|\Sigma_{\tilde{\mathcal{D}}}} = \mathcal{F}_{1, \, k}\\
\end{array}, \quad F_{+, \, k} \in \Omega^2(J_{\tilde{\mathcal{D}}}^+(\Sigma_{\tilde{\mathcal{D}}})).
\right.
\end{equation}
We set
\[ F_{+, \, k} = \hat{F}_k - H_{+, \, k},\]
where $\hat{F}_k$ was explicitly constructed in Section \ref{secformpart} and $H_{+, \, k}$ was also explicitly constructed in Section \ref{secpbin}. The function $F_{+, \, k}$ is smooth and 
since $H_{+, \, k} = 0$ on $J_{\tilde{\mathcal{D}}}^-(\Sigma_{\tilde{\mathcal{D}}})$, $F_{+, \, k} = \hat{F}_k$ to infinite order on $\Sigma_{\tilde{\mathcal{D}}}$.
In particular,
\[     F_{+, \, k \, \, | \Sigma_{\tilde{\mathcal{D}}}} = \mathcal{F}_{0, \, k}  \quad \text{and}  \quad \nabla_{n} F_{+, \, k \, \,|\Sigma_{\tilde{\mathcal{D}}}} = \mathcal{F}_{1, \, k}.\]
Moreover,
\begin{eqnarray*}
 (\Delta_{\tilde{\mathcal{D}}}^{(k)} + \gamma^{(k)})F_{+, \, k} &=& (\Delta_{\tilde{\mathcal{D}}}^{(k)} + \gamma^{(k)})(\hat{F}_k-H_{+, \, k}) \\
 &=& (\Delta_{\tilde{\mathcal{D}}}^{(k)} + \gamma^{(k)})(\hat{F}_k) - (\Delta_{\tilde{\mathcal{D}}}^{(k)} + \gamma^{(k)})(H_{+, \, k}) \\
 &=& G_{+, \, k} - G_{+, \, k} \\
 &=& 0, \quad \text{on} \quad J_{\tilde{\mathcal{D}}}^+(\Sigma_{\tilde{\mathcal{D}}}).
\end{eqnarray*}
Therefore, $F_{+, \, k}$ is a solution of the ``future" Cauchy problem (\ref{futcau}).\\
\noindent
Similarly, by considering the form
\begin{equation}\label{defg2}
  G_{-, \, k}(t,x) = \left \{
\begin{array}{ccc}
    0 \quad &\text{if}& \quad x \in J_{\tilde{\mathcal{D}}}^+(\Sigma_{\tilde{\mathcal{D}}}) \\
    (\Delta_{\tilde{\mathcal{D}}}^{(k)} + \gamma^{(k)})(\hat{F}_k) \quad &\text{if}& \quad x \in J_{\tilde{\mathcal{D}}}^-(\Sigma_{\tilde{\mathcal{D}}}) \\
\end{array},
\right.
\end{equation}
we then obtain a solution of the ``past'' Cauchy problem:
\begin{equation}\label{pascau}
  \left \{
\begin{array}{ccc}
    &(\Delta_{\tilde{\mathcal{D}}}^{(k)} + \gamma^{(k)})F_{-, \, k} = 0& \\
    F_{-, \, k \, \,| \Sigma_{\tilde{\mathcal{D}}}} = \mathcal{F}_{0, \, k} & \text{and} & \nabla_{n} F_{-, \, k \, \,|\Sigma_{\tilde{\mathcal{D}}}} = \mathcal{F}_{1, \, k}\\
\end{array}, \quad F_{-, \, k} \in \Omega^2(J_{\tilde{\mathcal{D}}}^-(\Sigma_{\tilde{\mathcal{D}}})),
\right.
\end{equation}
by setting
\[ F_{-, \, k}(t,x) = \hat{F}_k(t,x) - H_{-, \, k}(t,x),\]
where $H_{-, \, k}$ is the solution of the inhomogeneous problem
\[  (\Delta_{\tilde{\mathcal{D}}}^{(k)} + \gamma^{(k)})(H_{-, \, k} ) = G_{-, \, k},\]
constructed in Section \ref{secpbin}.\\
\noindent
Finally, we put
\begin{equation}\label{defFk}
  F_k(t,x) = \left \{
\begin{array}{ccc}
    F_{+, \, k} (t,x) &\text{if}& \quad x \in J_{\tilde{\mathcal{D}}}^+(\Sigma_{\tilde{\mathcal{D}}}) \\
    F_{-, \, k} (t,x) &\text{if}& \quad x \in J_{\tilde{\mathcal{D}}}^-(\Sigma_{\tilde{\mathcal{D}}}) \\
\end{array}.
\right.
\end{equation}
Then, since $F_{+, \, k}$ and $F_{-, \, k}$ coincide to infinite order on $\Sigma_{\tilde{\mathcal{D}}}$, we conclude that $F_k$ is a smooth $2$-form on
$J_{\tilde{\mathcal{D}}}(\Sigma_{\tilde{\mathcal{D}}})$,
and in particular that 
\[ F_{k \, \, | \Sigma_{\tilde{\mathcal{D}}}} = \mathcal{F}_{0, \, k} \quad \text{and} \quad \nabla_{n} F_{k \, \, |\Sigma_{\tilde{\mathcal{D}}}} = \mathcal{F}_{1, \, k}.\]
Moreover, by construction
\[ (\Delta_{\tilde{\mathcal{D}}}^{(k)} + \gamma^{(k)})F_k = 0.\]
The section $F_{\beta_k} = F_k \in \Omega^2(J_{\tilde{\mathcal{D}}}(\Sigma_{\tilde{\mathcal{D}}}))$, explicitly constructed above, is thus a solution of the
Cauchy problem (\ref{caupropart2}).

\subsection{Solution of the full Cauchy problem}\label{secfinpre}

In the previous section we have explicitly constructed in (\ref{defFk}) for each $k \in \{ 0, 1, 2\}$ a solution $F_k = F_{\beta_k}$ of the (\ref{caupropart2})
Cauchy problems restricted to a Fourier mode of  the Hodge Laplacian on the $3$-sphere:
\begin{equation*}
  \left \{
\begin{array}{ccc}
    &(\Delta_{\tilde{\mathcal{D}}}^{(k)} + \gamma^{(k)})F_k = 0& \\
    F_{k \, \,| \Sigma_{\tilde{\mathcal{D}}}} = \mathcal{F}_{0, \, k} & \text{and} & \nabla_{n} F_{k \, \, |\Sigma_{\tilde{\mathcal{D}}}} = \mathcal{F}_{1, \, k}\\
\end{array},
\right.
\end{equation*}
which is a concise formulation of the sequence of Cauchy problems:
\begin{equation*}
 \left \{
\begin{array}{ccc}
    &(\Delta_{\tilde{\mathcal{D}}}^{(2-k)} + \lambda_{\beta_k}^{(k)})F_{\beta_k} = 0& \\
    F_{\beta_k \, \, | \Sigma_{\tilde{\mathcal{D}}}} = \mathcal{F}_{0, \, \beta_k} & \text{and} & \nabla_{n} F_{\beta_k \, \, |\Sigma_{\tilde{\mathcal{D}}}} = \mathcal{F}_{1, \, \beta_k}\\
\end{array},
\right.
\end{equation*}
where $F_{\beta_k} \in \Omega^{2-k}(AdS^5 \times \S^2)$ is defined on $\tilde{\mathcal{D}}$, $\Sigma_{\tilde{\mathcal{D}}}$ is the restriction of $\Sigma_{\mathcal{D}}$ on
$AdS^5 \times \S^2$ and $\mathcal{F}_{0, \, \beta_k}$ and $\mathcal{F}_{1, \, \beta_k}$ are the corresponding initial data.
In order to obtain a solution of the complete Cauchy problem
 \begin{equation*}
 \left \{
\begin{array}{ccc}
    &\Delta_{\mathcal{D}}^{(2)} (F) = 0& \\
    F_{| \Sigma} = \mathcal{F}_0 & \text{and} & \nabla_{n} F_{|\Sigma} = \mathcal{F}_1\\
\end{array},
\right.
\end{equation*}
we then only need to add the preceding solutions in the following way:
\begin{equation}\label{construfinalF}
 F = \sum_{k=0}^2 \sum_{\beta_k \in B_k} F_{\beta_k} \wedge \eta_{\beta_k}^{(k)}.
\end{equation}
In order to complete our work we need to check the convergence of the series (\ref{construfinalF}), the smoothness the the $2$-form defined as its of its sum and check that this $2$-form is a solution of the source-free Maxwell equations. The proof of convergence of the Fourier expansion in terms of the eigenforms of the Hodge Laplacian on $\S^3$ is straightforward and uses the Bessel inequality as in the standard scalar case.  Concerning the smoothness of the $2$-form $F$ defined in (\ref{construfinalF}) this is a consequence of the fact that the solutions $F_{\beta_k}$ of the sequence of Cauchy problems are smooth and the eigenforms of the Hodge Laplacian on the $3$-sphere $\eta_{\beta_k}^{(k)}$ are also smooth. Finally, $F$ is, by construction, a solution of the Maxwell equations since it is a solution of the Cauchy problem for the Hodge Laplacian introduced in Section \ref{secredwav}. Indeed, using the decomposition of the Hodge Laplacian studied in Section \ref{secdechod} and the
construction in Section \ref{secprepar} of the solutions $F_{\beta_k}$ of the sequence of Cauchy problems, we immediately obtain that $F$ is a solution of the Cauchy problem for the Hodge Laplacian. 

In conclusion, the $2$-form $F \in \Omega^2(\mathcal{D})$ defined by (\ref{construfinalF}) is an explicit solution of Cauchy problem for the source-free Maxwell equations in a causal domain $\cal D$ contained in a geodesically normal domain of the Lorentzian manifold $AdS^5 \times \S^2 \times \S^3$, expressed in terms of the forms $F_{\beta_k}$ constructed above in (\ref{defFk}) and the eigenforms $\eta_{\beta_k}^{(k)}$ of the Hodge Laplacian on the $3$-sphere, which are explicitly recalled in Appendix \ref{apspeclap}. It thus provides an explicit representation of the solution of the Green's function.

\appendix

\section{Spectral theory of the Hodge Laplacian on the 3-sphere}\label{apspeclap}

The aim of this Appendix is to recall some results on the spectral theory of the Hodge Laplacian on the $3$-sphere. In particular we are interested in the spectrum
and explicit orthonormal eigenmodes of $\Delta_{\S^3}^{(k)}$ for $k \in \{0,1,2,3\}$ with respect to the Hodge scalar product.
This question has been largely studied and we will, in this Appendix, simply and strictly recall the results
contained in the well-written and self-contained paper \cite{BHQR16}. We also refer the reader to the references \cite{A73,CT85,GS78,H91,J78,L04,RO84,RO85} given in this last paper.

We first study the scalar harmonics on the unit sphere $\S^3$. We introduce the H\"opf coordinates 
\begin{equation*}
 \left \{
\begin{array}{cc}
    x^1 &= \sin(\alpha) \cos(\varphi) \\
    x^2 &= \sin(\alpha) \sin(\varphi)\\
    x^3 &= \cos(\alpha) \cos(\theta)\\
    x^4 &= \cos(\alpha) \sin(\theta)\\
\end{array},
\right.
\end{equation*}
where $\alpha \in [0,\pi/2]$, $(\theta,\varphi) \in [ 0, 2\pi [^2$ and we study the Laplace-de Rham operator $\Delta_{\S^3}^{(k)} := -(d \delta + \delta d)$ acting on the
scalar functions on $\S^3$, i.e. in the case $k=0$. Let us thus consider the normalized scalar modes for this operator, i.e. the solutions of the eigenvalue equation
\[ \Delta_{\S^3}^{(0)} \Phi_i = \lambda_i \Phi_i,\]
where $\Phi_i$ are the modes corresponding to the eigenvalue $\lambda_i = -L(L+2)$, $L \in \N$. The index $i = (L,m_+,m_-)$ allows us to label the modes in the following
way: $|m_{\pm}| \leq \frac{L}{2}$ and $\frac{L}{2} - m_{\pm} \in \N$, and
\[ \Phi_i = T_{L,m_+,m_-}(\alpha,\varphi,\theta) := C_{L,m_+,m_-} e^{i(S \varphi + D \theta)}(1-x)^{\frac{S}{2}} (1+x)^{\frac{D}{2}} P_{\frac{L}{2} - m_+}^{(S,D)}(x),\]
where $P_n^{(a,b)}$ is a Jacobi polynomial, $x = \cos(2\alpha)$, $S := m_+ + m_-$, $D := m_+ - m_-$ and
\[ C_{L,m_+,m_-} := \frac{1}{2^m + \pi} \sqrt{\frac{L+1}{2}} \sqrt{\frac{(L/2 + m_+)! (L/2 - m_+)!}{(L/2 + m_-)! (L/2 - m_-)!}} .\]

Before studying the eigenforms of $\Delta_{\S^3}^{(1)}$ we need to introduce some extra quantities. To this effect, let us introduce the two Killing vectors
\[ \xi := X_{12} + X_{34} = \partial_{\varphi} + \partial_{\theta},\]
\[ \xi' := X_{12} - X_{34} = \partial_{\varphi} - \partial_{\theta},\]
where $X_{ij} := x_i \partial_j - x_j \partial_i$ are the generators of the $so(4)$ algebra.\\
\noindent
Let us now study the eigenvalue equation
\[ \Delta_{\S^3}^{(1)} \alpha = \lambda \alpha,\]
where $\alpha$ is a one-form on $\S^3$. Using the Hodge decomposition we can prove that the space of solutions of this equation is the direct sum of two orthogonal
subspaces containing the exact and the co-exact one-forms, respectively. Concerning the exact one-forms, the eigenvalues are $-L(L+2)$, $L \in \N \setminus \{0\}$, the
eigenforms are given by the exterior derivatives of the scalar modes and the dimension of the associated proper subspace $\varepsilon_L^E$ is $d^E = (L+1)^2$. The 
eigenvalues for the co-exact one-forms are given by $-L^2$, $L \in \N \setminus \{0,1\}$ and the dimension of the associated proper subspaces 
$\varepsilon_L^{CE}$ is $d^{CE} = 2(L-1)(L+1)$.\\
\noindent
We now construct an explicit orthonormal basis of eigenforms. For this purpose let us define:
\[ A_i := d \Phi_i,\]
\[ B_i := * d \Phi_i \tilde{\xi}, \quad B_i' := * d \Phi_i \tilde{\xi'},\]
\[ C_i := * d B_i, \quad C_i' := * d B_i',\]
where $\tilde{\xi}$ is the one-form associated with the vector $\xi$, i.e. $\tilde{\xi} := \flat \xi$. In these expressions $A_i$ is an exact one-form whereas $B_i$, $C_i$,
$B_i'$ and $C_i'$ are co-exact one-forms. Finally, we introduce the combinations
\[ E_i := (L+2) B_i + C_i \quad \text{and} \quad E_i' := (L+2) B_i' - C_i',\]
where $i = (L,m_+,m_-)$ (respectively $i = (L,m_+',m_-')$). The main result of \cite{BHQR16} is then the following:

\begin{theorem}[\cite{BHQR16}]
 \begin{enumerate}
  \item The one-forms $E_{L,m_+,m_-}$ and $E_{L,m_+',m_-'}'$ satisfy
  \[ \Delta E_{L,m_+,m_-} = -L^2 E_{L,m_+,m_-},\]
  \[ \Delta E_{L,m_+',m_-'}'= -L^2 E_{L,m_+',m_-'}',\]
  for $L \geq 2$.
  \item The family of one-forms
  \begin{equation*}
 \left \{
\begin{array}{c}
    E_{L,m_+,m_-}, \quad L \geq 2, \quad |m_+| \leq \frac{L}{2} - 1, \quad |m_-| \leq \frac{L}{2},\\
    E_{L,m_+',m_-'}', \quad L \geq 2, \quad |m_-'| \leq \frac{L}{2} - 1, \quad |m_+'| \leq \frac{L}{2},\\
\end{array}
\right.
\end{equation*}
once normalized, form an orthonormal basis of the corresponding proper subspace of co-exact one-forms $\varepsilon_L^{CE}$.
\item The whole set of one-forms: exact $\{ A_{L,m_+,m_-} \}$, for $L \geq 1$, and co-exact $\{ E_{L,m_+,m_-}, E_{L,m_+',m_-'}' \}$, as above, form a complete orthonormal
set of modes for the Laplace-de Rham operator on $\S^3$.
 \end{enumerate}
\end{theorem}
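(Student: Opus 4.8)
The plan is to prove the three assertions in turn, reducing everything to three ingredients: the scalar eigenvalue equation $\Delta_{\S^3}^{(0)}\Phi_i=-L(L+2)\Phi_i$ recalled above, the Hodge decomposition of $\Omega^1(\S^3)$, and the symmetry of $\S^3$, whose isometry group has Lie algebra $so(4)\cong su(2)\oplus su(2)$ with the two summands generated by the self-dual and the anti-self-dual Killing fields; $\xi$ and $\xi'$ are Cartan generators of the two factors, so $\mathcal L_\xi$ and $\mathcal L_{\xi'}$ commute with $d$, $\delta$, $*$ and hence with the Laplace-de Rham operator. The exact sector will be immediate, while the co-exact sector carries the real content, and there the curl operator $*d$, which satisfies $\Delta=-(*d)^2$ on co-closed $1$-forms, is the right tool: its two eigenvalues $+L$ and $-L$ label the two chiralities.

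For part (1), $A_i=d\Phi_i$ is exact with $\Delta A_i=-L(L+2)A_i$, so only $E_i$ and $E_i'$ need work. I would compute $\Delta B_i$ and then $\Delta C_i=*d\,\Delta B_i$ directly from $d^2=0$, the adjointness of $d$ and $\delta$, the scalar eigenvalue equation, and the identity $*d\,\tilde\xi=2\tilde\xi$ for a self-dual Killing $1$-form (a one-line computation in the Hopf coordinates, equivalently the fact that the Killing $1$-forms make up $\varepsilon_2^{CE}$). The expected outcome is that $\mathrm{span}\{B_i,C_i\}$ is a $\Delta$-invariant plane carrying precisely the eigenvalues $-L^2$ and $-(L+2)^2$, with eigenvector $(L+2)B_i+C_i=E_i$ for the value $-L^2$; running the same computation with $\xi'$ reverses the chirality, which flips the relevant sign and yields $E_i'=(L+2)B_i'-C_i'$ with $\Delta E_i'=-L^2E_i'$. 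This $2\times2$ computation, in particular pinning down the coefficient $L+2$, is the step I expect to be the genuine obstacle; everything after it is essentially bookkeeping.

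For part (2), I would first match dimensions: for fixed $L$ the constraints $|m_+|\le\frac L2-1$, $\frac L2-m_+\in\N$ leave $L-1$ admissible values of $m_+$ and $|m_-|\le\frac L2$ leaves $L+1$ of $m_-$, so the $E$-family and the $E'$-family each have $(L-1)(L+1)$ members, for a total of $2(L-1)(L+1)=d^{CE}=\dim\varepsilon_L^{CE}$. Orthogonality then follows because $\Delta_{\S^3}^{(1)}$, $\mathcal L_\xi$, $\mathcal L_{\xi'}$ and $*d$ form a commuting family whose joint spectrum separates all the indices: within one family distinct $(m_+,m_-)$ give distinct $\mathcal L_\xi$- and $\mathcal L_{\xi'}$-eigenvalues, while the $E$'s and the $E'$'s lie in opposite eigenspaces of $*d$. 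It remains to check that none of the listed forms vanishes --- this is exactly where the sharp ranges on $m_\pm$ enter, the construction degenerating at the extreme weights $|m_+|=\frac L2$ resp. $|m_-|=\frac L2$ --- after which $2(L-1)(L+1)$ nonzero mutually orthogonal elements of the $2(L-1)(L+1)$-dimensional space $\varepsilon_L^{CE}$, once normalized, constitute an orthonormal basis of it.

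For part (3), $\S^3$ is compact with $H^1_{dR}(\S^3)=0$, so the Hodge decomposition reads $\Omega^1(\S^3)=d\,\Omega^0(\S^3)\oplus\delta\,\Omega^2(\S^3)$ with no harmonic part and with orthogonal summands. On the first summand $d$ is a bijection from the non-constant scalar harmonics onto the exact eigen-$1$-forms and $\langle d\Phi_i,d\Phi_j\rangle=L(L+2)\langle\Phi_i,\Phi_j\rangle$, so the normalized $A_i=d\Phi_i$ ($L\ge1$) form an orthonormal basis of it; the second summand is the space of co-exact $1$-forms, which is $\Delta$-invariant and decomposes as $\bigoplus_{L\ge2}\varepsilon_L^{CE}$, each factor receiving an orthonormal basis from part (2). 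The union of the two resulting families is therefore a complete orthonormal system of eigenforms of the Laplace-de Rham operator on $\S^3$, which is the assertion.
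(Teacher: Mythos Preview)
The paper does not prove this theorem: it is stated in Appendix~\ref{apspeclap} as a result quoted verbatim from \cite{BHQR16}, the appendix saying explicitly that it will ``simply and strictly recall the results'' from that reference. There is therefore no proof in the present paper to compare your proposal against.

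That said, your outline is sound and is essentially the argument of \cite{BHQR16}. The key structural points are all correct: on co-closed $1$-forms on a $3$-manifold one has $\Delta=-(*d)^2$, so eigenvectors of the curl $*d$ with eigenvalue $\pm L$ give Laplace eigenvalue $-L^2$; the two chiralities correspond to the two $su(2)$ factors of $so(4)$ and are interchanged by passing from $\xi$ to $\xi'$; the dimension count $(L-1)(L+1)+(L-1)(L+1)=2(L-1)(L+1)=d^{CE}$ is right; orthogonality follows from the commuting family $(\Delta,\mathcal L_\xi,\mathcal L_{\xi'},*d)$ separating the labels; and part~(3) is standard Hodge theory on a compact manifold with $H^1_{dR}(\S^3)=0$.

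The only place you stop short is the $2\times2$ computation in part~(1), which you yourself flag as the genuine obstacle. That is indeed where the content lies, and your sketch does not carry it out. To complete it one computes $*dB_i$ and $*dC_i$ as explicit linear combinations of $B_i$ and $C_i$, using that the Killing $1$-form $\tilde\xi$ is itself a curl eigenvector ($*d\tilde\xi=2\tilde\xi$) together with the scalar equation $\Delta\Phi_i=-L(L+2)\Phi_i$; the resulting $2\times2$ matrix has eigenvalues $L$ and $-(L+2)$ for $*d$, and $E_i=(L+2)B_i+C_i$ is the eigenvector for the value $L$. The non-vanishing check at the boundary weights $|m_\pm|=\tfrac L2$ that you mention in part~(2) is also a point where an actual computation is required rather than a structural argument.
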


\begin{remark}
 Using the Hodge star operator we also obtain explicit orthonormal basis of eigenforms for $\Delta_{\S^3}^{(2)}$ and $\Delta_{\S^3}^{(3)}$ from the previous ones.
\end{remark}

\section{Hodge decomposition and perspectives}\label{appersp}

The aim of this Appendix is to give some remarks and ideas on the Cauchy problem we study in this paper in order to suggest another way to solve it and also to propose some
generalizations of our result, which was initially motivated by the Cauchy problem studied in \cite{EK10}.

As we did before in our paper we study the Cauchy problem for the Hodge Laplacian given by
\begin{equation*}
 \left \{
\begin{array}{ccc}
    &\Delta_{AdS^5 \times \S^2 \times \S^3}^{(2)} (F) = 0& \\
    F_{| \Sigma} = \mathcal{F}_0 & \text{and} & \nabla_{n} F_{|\Sigma} = \mathcal{F}_1\\
\end{array},
\right.
\end{equation*}
where $\Delta^{(2)} = -(d \delta + \delta d)$ is the Hodge Laplacian acting on $2$-forms of $AdS^5 \times \S^2 \times \S^3$
and $\Sigma$ is a Cauchy hypersurface of $AdS^5 \times \S^2 \times \S^3$.
Since $\mathcal{M} = AdS^5 \times \S^2 \times \S^3$ is a product manifold we can use a decomposition of the Hodge Laplacian which is different from the one we used in Section
\ref{secdechod} that allows us to obtain equations only on $AdS^5$. Precisely, we know that
\[ \Delta^{(2)}_{AdS^5 \times \mathbb{S}^2 \times \mathbb{S}^3} = \begin{pmatrix}
\Delta^{(0)}_{AdS^5} \oplus \Delta^{(2)}_{\S^2 \times \mathbb{S}^3} & 0 & 0 \\
0 & \Delta^{(1)}_{AdS^5} \oplus \Delta^{(1)}_{\S^2 \times \mathbb{S}^3} & 0 \\
0 & 0 &\Delta^{(2)}_{AdS^5} \oplus \Delta^{(0)}_{\S^2 \times \mathbb{S}^3}
\end{pmatrix}.\]
We can then use the explicit eigenmodes on the $2$ and $3$-spheres given for instance in \cite{BHQR16,KMHR00,L04} in order to simplify the part corresponding to the 
Hodge Laplacian on the product of spheres $\S^2 \times \S^3$. Using a well-chosen basis of eigenforms on $\S^2 \times \S^3$ we obtain
a new countable family of Cauchy problems on each mode, given, for $k \in \{0,1,2 \}$, by:
 \begin{equation*}
 \left \{
\begin{array}{ccc}
    &(\Delta_{AdS^5}^{(k)} + \lambda_{\beta_k}^{(2-k)})F_{\beta_k} = 0& \\
    F_{\beta_k \, \, | \tilde{\Sigma}} = \mathcal{F}_{0, \, \beta_k} & \text{and} & \nabla_{n} F_{\beta_k \, \, |\tilde{\Sigma}} = \mathcal{F}_{1, \, \beta_k}\\
\end{array},
\right.
\end{equation*}
where $(\lambda_{\beta_k}^{(k)})_{\beta_k}$ is the spectrum of the Hodge $k$-Laplacian on $\S^2 \times \S^3$, $\Delta_{\S^2 \times \S^3}^{(k)}$, $\tilde{\Sigma}$ is the
restriction of $\Sigma$ on $AdS^5$ and $F_{\beta_k} \in \Omega^2(AdS^5)$.\\
\noindent
First of all, let us note that the scalar block $(\Delta_{AdS^5}^{(0)} + \gamma^{(0)})F_{\beta_0}$, where $\gamma^{(k)} = \lambda_{\beta_{2-k}}^{(2-k)}$, has been studied in
\cite{IW03} and that we can therefore use this latter work to obtain an explicit representation of $F_{\beta_0}$. Indeed, in \cite{EK10} the authors obtain a scalar ODE in terms of $(t,x)$
that falls within the framework of the important contribution \cite{IW03} where a representation of the solution of an equation of the form
\begin{equation}\label{eqIW}
 \partial_{tt}^2 \Phi = -A \Phi, 
\end{equation}
for a positive and symmetric operator $A$ in the variable $x$, is given.\\
\noindent
Regarding the other blocks, we could use a moving frame and the Hodge decomposition on differential forms to eliminate the $\S^3$-contribution from the $AdS^5$ geometry (see (\ref{metads5})) and then obtain an
operator in terms of $(t,x) \in (N^2,g_{N^2})$, where $g_{N^2} = \frac{-dt^2 + dx^2}{\kappa \sin(x)^2}$. More precisely, on this $2$-manifold $N^2$ we obtain from the previous
equations on the Hodge k-Laplacians of $AdS^5$ two kinds of equations given by:
\[ (\Delta_{N^2}^{(0)} + \gamma^{(0)})F^{(0)}(t,x) = 0,\]
which is of the previous form and can be explicitly solved by \cite{IW03}, and
\begin{equation}\label{eqmat}
 (\Delta_{N^2}^{(1)} + \gamma^{(1)})F^{(1)}(t,x) = 0,
\end{equation}
which is slightly different. Indeed, there are two differences between (\ref{eqmat}) and Equation (\ref{eqIW}) of \cite{IW03}. First, Equation (\ref{eqmat}) is a matrix equation but this point should not be an obstacle. More important is the fact that
Equation (\ref{eqmat}) contains a matrix-valued term of order one in time that a priori prevents us from using \cite{IW03}.
This last term is the reason why we did not use directly \cite{IW03} in our work. Nevertheless, we could probably still use a similar analysis to obtain a
representation of the solution of the Maxwell equations in our framework in the spirit of \cite{IW03}.

We would also like to mention the work \cite{IW04} in which the Maxwell
equations are shown to be of the form (\ref{eqIW}) on globally hyperbolic manifolds using potential forms. The reason why we did not use this work lies 
in the fact that, even if we restricted our analysis to a geodesically normal domain, we do not want to introduce such potential forms because of possible extensions to the entire non-globally hyperbolic manifold $\mathcal{M} = AdS^5 \times \S^2 \times \S^3$.

Finally, for future work, we are interested in obtaining a representation of the solution of the Maxwell equations on $AdS^5 \times \S^2 \times \S^3$ similar to the one given in \cite{IW03}
because we would like
to generalize this result to the $AdS^5 \times Y^{p,q}$ solutions of type IIB supergravity discovered in \cite{GMSW}, where $Y^{p,q}$ is a certain irregular cohomogeneity one Sasaki-Einstein structure on $\S^2\times \S^3$, for which the representation of the solution of the Klein-Gordon equation in terms of a Green's function is given in \cite{EK10}. If we could use the procedure coming from \cite{EK10,IW03} on $AdS^5 \times \S^2 \times \S^3$ we would then need to obtain a spectral representation of the Hodge Laplacian on $Y^{p,q}$ in order to adapt our result on the manifold $AdS^5 \times Y^{p,q}$. This question is being currently investigated.

\vspace{0,5cm}
\noindent
\textit{Acknowledgments:} Research supported by NSERC RGPIN 105490-2011.

\newpage
{}

\noindent
D\'epartement de Math\'ematiques, Universit\'e de Nantes, 2, rue de la Houssini\`ere, BP 92208, 44322 Nantes Cedex 03, France.\\
\textit{Email adress}: damien.gobin@univ-nantes.fr.\\

\noindent
Department of Mathematics and Statistics, McGill University, Montreal, QC, H3A 0B9, Canada.\\
\textit{Email adress}: niky.kamran@mcgill.ca.


\end{document}